\newcommand{\co}{\mathcal{O}}
\newcommand{\cl}{\mathcal{L}}
\newcommand{\cn}{\mathcal{N}}
\newcommand{\cc}{\mathcal{C}}
\newtheorem{theorem}{Theorem}[section]
\newtheorem{lemma}[theorem]{Lemma}
\newtheorem{corollary}[theorem]{Corollary}
\newtheorem{proposition}[theorem]{Proposition}
\newtheorem{noname*}[theorem]{}
\theoremstyle{definition}
\newtheorem{definition}[theorem]{Definition}
\theoremstyle{remark}
\newtheorem{remark}[theorem]{Remark}
\numberwithin{equation}{section}
\DeclareMathOperator{\Hom}{Hom}
\DeclareMathOperator{\dgHom}{\mathcal{H}om}
\DeclareMathOperator{\Ext}{Ext}
\DeclareMathOperator{\dm}{D^{\textup{b}}_{\textup{m}}}
\newcommand{\dbl}{\mathrm{D}^{\mathrm{b}}_L}
\newcommand{\dbg}{\mathrm{D}^{\mathrm{b}}_G}
\DeclareMathOperator{\vd}{\mathbb{D}}
\DeclareMathOperator{\rep}{Rep}
\DeclareMathOperator{\irr}{Irr}
\DeclareMathOperator{\GL}{GL}
\DeclareMathOperator{\ql}{\bar{\mathbb{Q}}_\ell}
\DeclareMathOperator{\cH}{H}
\DeclareMathOperator{\ic}{IC}
\DeclareMathOperator{\pt}{pt}
\DeclareMathOperator{\perv}{\mathcal{P}erv}
\DeclareMathOperator{\Lie}{Lie}
\DeclareMathOperator{\End}{End}
\DeclareMathOperator{\dbc}{D_{c}^{b}}
\DeclareMathOperator{\constant}{\underline{\mathscr{C}}}
\newcommand{\cusp}{\mathbf{c}}
\begin{document}

\title[ Sheaves on the Nilpotent Cone]{Perverse Sheaves on the Nilpotent Cone and Lusztig's Generalized Springer Correspondence}

\author{Laura Rider}
\address{Department of Mathematics\\
Massachusetts Institute of Technology\\
Cambridge, Massachusetts}
\email{laurajoy@mit.edu}

\author{Amber Russell}
\address{Department of Mathematics\\
University of Georgia\\
Athens, Georgia}
\email{arussell@math.uga.edu}

\subjclass[2010]{Primary 17B08, Secondary 20G05, 14F05}
\thanks{L.R. was supported by an NSF Postdoctoral Research Fellowship. A.R. was supported in part by the UGA VIGRE II grant DMS-07-38586 and the NSF RTG grant DMS-1344994 of the AGANT research group at UGA.}

\begin{abstract} In this note, we consider perverse sheaves on the nilpotent cone. We prove orthogonality relations for the equivariant category of sheaves on the nilpotent cone in a method similar to Lusztig's for character sheaves. We also consider cleanness for cuspidal perverse sheaves and the (generalized) Lusztig--Shoji algorithm.
\end{abstract}

\maketitle

\section{Introduction}

Let $G$ be a connected, reductive algebraic group defined over an algebraically closed field of good characteristic, and let $\cn$ be its nilpotent cone. We consider $\dbg(\cn)$, the $G$-equivariant derived category of constructible sheaves on $\cn$. This category encodes the representation theory of the Weyl group $W$ of $G$ via a perverse sheaf called the Springer sheaf $\mathbb{A}$ (see Section \ref{sec:prelim} for a definition) and Springer's correspondence. Of course, the category $\perv_G(\cn)$ of $G$-equivariant perverse sheaves on $\cn$ contains much more information---there are $G$-equivariant perverse sheaves on $\cn$ that are not part of the Springer correspondence. In \cite{L1}, Lusztig accounts for the extra information and relates it to the representation theory of relative Weyl groups in his generalized Springer correspondence. His classification relies on understanding the \textit{cuspidal data} associated to $G$ (see Definition \ref{def:cuspdata}).

Lusztig's work also reveals that understanding the stalks of these perverse sheaves on $\cn$ (given by generalized Green functions) is an important part of the computation of characters of finite groups of Lie type. Using the Lusztig--Shoji algorithm \cite{S, L5}, these stalks can be computed using only knowledge of Weyl (or relative Weyl) group representations.

Our primary goal is to understand Lusztig's work on the generalized Springer correspondence using methods inspired by \cite{A} in the Springer setting. One of our main results is Theorem \ref{thm:orthog}, which gives an orthogonal decomposition:  \[\dbg(\cn) \cong \bigoplus_{\cusp/\sim} \dbg(\cn, \mathbb{A}_{\cusp}).\] Here, $\cusp$ denotes a cuspidal datum and $\dbg(\cn, \mathbb{A}_{\cusp})$ is the triangulated category generated by the simple summands of the perverse sheaf  $\mathbb{A}_{\cusp}$ induced from the cuspidal datum $\cusp$. This result relies on a key property enjoyed by cuspidal local systems on the nilpotent cone in good characteristic proven by Lusztig. That is, distinct cuspidals have distinct central characters. 

In Section \ref{sec:green}, we interpret the definitions of generalized Green functions directly in terms of perverse sheaves on the nilpotent cone avoiding characteristic functions on the corresponding group. We reprove the Lusztig--Shoji algorithm in Theorem \ref{thm:LSalgo} using the re-envisioned Green functions. Our proofs are heavily influenced by those of Lusztig in \cite{L5}. From this point of view, Theorem \ref{thm:orthog} may be viewed as a categorical version of the orthogonality relations among generalized Green functions. 

In Section \ref{sec:clean}, we prove cleanness for cuspidal local systems on the nilpotent cone (see Proposition  \ref{cusp clean}). In \cite{L5}, Lusztig proves the much stronger result that cuspidal character sheaves are clean. Logically, our proof is similar in spirit to Lusztig's. However, a key role in our argument is played by the orthogonal decomposition in Theorem  \ref{thm:orthog} considerably simplifying the exposition. In particular, from this point of view, cleanness for local systems is implied by the fact that the triangulated categories generated by the corresponding simple perverse sheaf and its Verdier dual are orthogonal to the `rest' of the category.

Proposition \ref{prop:winvar} provides a computation of the $\Ext$-groups between perverse sheaves on the nilpotent cone in terms of relative Weyl group invariants. This proposition is a first step towards proving formality for non-Springer blocks of sheaves on the nilpotent cone. Formality for the other blocks would complete the description of the equivariant derived category on the nilpotent cone, as was initiated by the first author \cite{Ri} in the Springer case.

\subsection*{Organization of the paper} We review perverse sheaves on the nilpotent cone and the generalized Springer correspondence in Section \ref{sec:prelim}. In Section \ref{sec:orthog}, we prove our orthogonal decomposition (Theorem \ref{thm:orthog}). In Section \ref{sec:clean}, we prove cleanness for cuspidal local systems (\ref{cusp clean}), give some $\Ext$ computations (Proposition \ref{prop:winvar}), and discuss purity for these $\Ext$-groups (Corollary \ref{cor:pure}). In Section \ref{sec:green}, we discuss generalized Green functions and the Lusztig--Shoji algorithm. Finally, in the appendix, we review some properties of central characters for equivariant perverse sheaves. In particular, we prove Proposition \ref{ccHomvanish}, that perverse sheaves with distinct central characters have no extensions betweem them.

\section{Preliminaries}\label{sec:prelim}
We fix an algebraically closed field $k$ of positive characteristic. All varieties we consider will be defined over $k$ except in Section \ref{sec:green} when we need to employ mixed sheaves as developed in \cite{De} and \cite[Section 5]{BBD}. For an action of an algebraic group $G$ on a variety $X$, we consider the categories, denoted $\perv_G(X)\subset \dbg(X)$, of $G$-equivariant perverse sheaves on $X$ and the $G$-equivariant (bounded) derived category of sheaves on $X$. See \cite{BL} for background and definitions related to these categories. All of our sheaves will have $\ql$-coefficients. For $\mathscr{F}, \mathscr{G}\in\dbg(X)$, we let $\Hom^i(\mathscr{F}, \mathscr{G}):=\Hom_{\dbg(X)}(\mathscr{F}, \mathscr{G}[i])$. All sheaf functors are understood to be derived. We denote the constant sheaf on $X$ by $\constant_X$ or just $\constant$ when there is no ambiguity.

\subsection{The Springer correspondence}
From now on, we fix a connected, reductive algebraic group $G$ defined over $k$, and we let $\cn$ denote its nilpotent cone. We also assume that $k$ has good characteristic for $G$. We consider $G$-equivariant sheaves on $\cn$ with respect to the adjoint $G$-action. 

The nilpotent cone $\cn$ is a singular variety and has a well-studied desingularization called the Springer resolution, denoted $\mu: \widetilde{\cn}\rightarrow \cn$. Let $B$ be a Borel subgroup of $G$ with Levi decomposition $B = TU$, where $T$ is a maximal torus in $G$ and $U$ is the unipotent radical of $B$. Then, $\widetilde{\cn}=G\times^B \mathfrak{u}$, where $\mathfrak{u} = \Lie(U)$. We also note that $\widetilde{\cn}$ can be identified with the cotangent bundle of the flag variety $T^*G/B$. Hence, we have maps \begin{equation}\label{Sprmaps}\cn\stackrel{\mu}{\longleftarrow}\widetilde{\cn}\stackrel{\pi}{\longrightarrow}G/B.\end{equation} \begin{definition} The \textit{Springer sheaf}, denoted $\mathbb{A}$, is defined by \[\mathbb{A}:=\mu_!\pi^*\constant[2d],\] where $\constant$ is the constant sheaf on $G/B$ and $d=\dim{\mathfrak{u}}$.\end{definition} It is well known that the Springer sheaf $\mathbb{A}$ is a semisimple perverse sheaf (see \cite{BM1}), and it is $G$-equivariant since the Springer resolution is a $G$-map.  Furthermore, its endomorphism ring is isomorphic as an algebra to the group algebra of the Weyl group $W$ of $G$, \[\End(\mathbb{A})\cong\ql[W].\] See \cite{BM1}. This ring isomorphism allows us to link the simple summands of $\mathbb{A}$ with irreducible $W$ representations. This is known as the Springer correspondence.

\subsection{The generalized Springer correspondence and cuspidal data}
In general, not all simple ($G$-equivariant) perverse sheaves on $\cn$ occur as part of the above correspondence. The goal of Lusztig's generalized Springer correspondence \cite{L1} is to systematically identify the missing pieces and to assign some representation theoretic meaning to them. To this end, we define analogues of the maps and varieties in \eqref{Sprmaps}.

Let $P$ be a parabolic subgroup of $G$ with Levi decomposition $P = LU$. We denote by $\cn_L$ the nilpotent cone for $L$ and $\mathfrak{u} = \Lie(U)$. We consider the following $G$-varieties and $G$-maps \[\widetilde{\cn}^P:=G\times^P(\mathfrak{u} + \cn_L) \hspace{.2cm}\textup{ and }\hspace{.2cm} \mathcal{C}_P:=G\times^P\cn_L,\]\[\cn\xleftarrow{\mu_P}\widetilde{\cn}^P\stackrel{\pi_P}{\longrightarrow}\mathcal{C}_P.\] The variety $\widetilde{\cn}^P$ is called a partial resolution of $\cn$ and is studied in \cite{BM2}, where it appears with the same notation. When there is no ambiguity, we omit the subscript $P$ on the maps for simplicity of notation. Note that $\mu$ is proper and $\pi$ is smooth of relative dimension $d$, so we have $\mu_! \cong \mu_*$ and $\pi^! \cong \pi^*[2d]$, where $d$ is the dimension of $\mathfrak{u}$. We consider the following functors \begin{equation}\label{eq:functors}\mathcal{I}^G_P = \mu_!\pi^*[d] \cong \mu_*\pi^![-d],\hspace{.6cm} \mathcal{R}^G_P = \pi_*\mu^![-d], \hspace{.4cm}\textup{ and }\hspace{.4cm}\widetilde{\mathcal{R}}^G_P=\pi_!\mu^*[d],\end{equation} which we refer to as induction and restriction functors. We have adjoint pairs $(\mathcal{I}^G_P, \mathcal{R}^G_P)$ and $(\widetilde{\mathcal{R}}^G_P, \mathcal{I}^G_P)$. In the case when our parabolic is a Borel $B$, we get the usual Springer resolution diagram from \eqref{Sprmaps}. By (equivariant) induction equivalence \cite{BL}, we have an equivalence of categories $\dbg(\mathcal{C}_P)\cong\dbl(\cn_L)$ that preserves the perverse $t$-structure. Thus, often we will think of the induction (respectively, restriction) functor as having domain (respectively, codomain) $\dbl(\cn_L)$:
\[\mathcal{I}^G_P: \dbl(\cn_L)\rightarrow \dbg(\cn) \textup{ and }\mathcal{R}^G_P, \widetilde{\mathcal{R}}^G_P:  \dbg(\cn)\rightarrow \dbl(\cn_L). \]

The following theorem is due to Lusztig; see \cite[Theorem 4.4]{L2} for part of the proof in the setting of character sheaves. For a proof in the setting of more general coefficient rings (Noetherian commutative ring of finite global dimension), see \cite[Proposition 4.7]{AHR}.
 \begin{theorem}The functors $\mathcal{I}^G_P$, $\mathcal{R}^G_P$, and $\widetilde{\mathcal{R}}^G_P$ are exact with respect to the perverse $t$-structure. \end{theorem}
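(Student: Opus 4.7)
The plan is to reduce all three $t$-exactness claims to two standard geometric properties of the defining morphisms.

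First I would observe that $\pi\colon \widetilde{\cn}^P \to \cp$ is a smooth morphism of relative dimension $d=\dim\mathfrak{u}$, since its fibres are identified with $\mathfrak{u}$. A standard consequence is that $\pi^*[d] = \pi^![-d]$ is $t$-exact for the perverse $t$-structure, so it suffices to analyse the effect of $\mu_!$, $\mu_*$, $\mu^!$, and $\mu^*$ on the remaining factor in each case.

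Second, the crucial geometric input is that $\mu\colon \widetilde{\cn}^P \to \cn$ is proper and \emph{semismall}. Properness is immediate: $G/P$ is projective, and $\mathfrak{u}+\cn_L$ sits inside $\cn$ as a closed subvariety. Semismallness is the assertion that for every $x\in\cn$,
\[
\dim \mu^{-1}(x) \;\leq\; \tfrac{1}{2}\bigl(\dim \cn - \dim G\cdot x\bigr).
\]
Granting this, the standard argument from semismallness plus properness yields that $\mu_*=\mu_!$ is $t$-exact.

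With these two pieces in hand, $\mathcal{I}^G_P=\mu_!\pi^*[d]$ is $t$-exact as a composition of $t$-exact functors. For the restriction functors, the adjunctions $(\widetilde{\mathcal{R}}^G_P,\mathcal{I}^G_P)$ and $(\mathcal{I}^G_P,\mathcal{R}^G_P)$ combined with $t$-exactness of $\mathcal{I}^G_P$ immediately give that $\widetilde{\mathcal{R}}^G_P$ is right $t$-exact and $\mathcal{R}^G_P$ is left $t$-exact. The remaining halves of $t$-exactness follow by a parallel fibre-dimension argument controlling $\mu^*$ and $\mu^!$, as carried out in detail in \cite[Proposition 4.7]{AHR}. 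The main obstacle is the semismallness of $\mu$: in characteristic zero this is a classical result of Borho--MacPherson, but in good positive characteristic one must verify that the relevant dimension formulas for nilpotent orbits and induced orbits carry over. This is exactly what the good characteristic hypothesis guarantees and is where Lusztig's analysis (as in \cite{L2}) enters.
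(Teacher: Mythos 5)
The paper itself gives no proof of this theorem; it only cites Lusztig \cite[Theorem 4.4]{L2} and \cite[Proposition 4.7]{AHR}. Your overall strategy --- $t$-exactness of $\pi^*[d]$ from smoothness, properness of $\mu$, a fibre-dimension estimate for $\mu$, and then adjunction to transfer exactness to the restriction functors --- is exactly the route taken in those references, and your deferral of the hard dimension estimate to the literature is no worse than what the paper does.

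There is, however, one genuine flaw in the step ``$\mu$ proper and semismall $\Rightarrow \mu_*$ is $t$-exact.'' The semismallness you state, $\dim\mu^{-1}(x)\le\frac12(\dim\cn-\dim G\cdot x)$, concerns only the full fibres, and properness plus this condition does \emph{not} make $\mu_*$ $t$-exact on arbitrary perverse sheaves on $\widetilde{\cn}^P$: already for the Springer resolution, $\mu_*$ applied to the perverse shifted constant sheaf on the central fibre $G/B$ is $R\Gamma(G/B)$ placed at $0\in\cn$, which is not perverse. What $\mathcal{I}^G_P$ actually feeds into $\mu_*$ is $\pi^*\mathcal{F}[d]$ for $\mathcal{F}$ perverse on $\cc_P$, i.e.\ a complex constructible with respect to the stratification of $\widetilde{\cn}^P$ by the pieces $G\times^P(\mathfrak{u}+\co_L)$, and the condition you need is the \emph{stratified} semismallness: for every $G$-orbit $\co\subset\cn$ and every $L$-orbit $\co_L\subset\cn_L$,
\[
\dim\bigl(\mu^{-1}(\co)\cap G\times^P(\mathfrak{u}+\co_L)\bigr)\;\le\;\tfrac12\bigl(\dim\bigl(G\times^P(\mathfrak{u}+\co_L)\bigr)+\dim\co\bigr),
\]
which unwinds to Lusztig's estimate $\dim\bigl(\co\cap(\co_L+\mathfrak{u})\bigr)\le\frac12(\dim\co+\dim\co_L)$. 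This finer estimate (valid in good characteristic) is the actual content of the cited results, and it is also what supplies the ``remaining halves'' of exactness for $\mathcal{R}^G_P$ and $\widetilde{\mathcal{R}}^G_P$; note that these two functors are interchanged by Verdier duality, so only one extra half needs to be proved geometrically. With the semismallness hypothesis corrected to the stratified version, your argument goes through.
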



 \begin{definition}\label{def:cuspdata} A simple perverse sheaf $\mathcal{F}\in\dbg(\cn)$ is called \textit{cuspidal} if $\mathcal{R}^G_P(\mathcal{F})= 0$ for all proper parabolics $P$. Let $\cl$ be a local system on a nilpotent orbit $\co\subset\cn$.  Then $\cl$ is called a \textit{cuspidal local system} if $\ic(\co, \cl)$ is cuspidal.  
 
A \textit{cuspidal datum} is a tuple $\cusp = (L, \co_L, \cl)$ where $L$ is a Levi subgroup of $G$, $\co_L$ is an $L$-orbit in $\cn_L$, and $\cl$ is a cuspidal local system on $\co_L$.  For brevity, we will often denote the simple perverse sheaf which corresponds to the cuspidal datum $\cusp$ as $\ic_\cusp$.\end{definition} 

Two cuspidal data $\cusp$ and $\cusp '$ are equivalent if they are conjugate in $G$, and in this case, we write $\cusp \sim \cusp '$.

\begin{definition} For the cuspidal datum $\cusp=(G, \co_G, \cl)$, we set $\mathbb{A}_{\cusp}  = \ic_\cusp$.  
Let $\cusp=(L, \co_L, \cl)$ be a cuspidal datum for $G$. We define the perverse sheaf $\mathbb{A}_{\cusp} = \mathcal{I}^G_P(\ic_\cusp)$, and call it a \textit{Lusztig sheaf}. Let $\dbg(\cn, \mathbb{A}_{\cusp})$ be the triangulated subcategory of $\dbg(\cn)$ generated by the simple summands of $\mathbb{A}_{\cusp}$. 
\end{definition}

\begin{remark}\label{rem:dualcusp} The only cuspidal datum when the Levi is a torus $T$ is $(T, \pt, \constant)$. This datum gives rise to the Springer sheaf $\mathbb{A}$. Also, as Lusztig noted in \cite[Section 2.5]{L1}, if $\ic(\co, \cl)$ is cuspidal, then so is $\vd\ic(\co, \cl)=\ic(\co, \cl^\vee),$ where $\vd$ is the Verdier duality functor and $\cl^\vee$ is the local system dual to $\cl$. \end{remark} 

Fix a cuspidal datum $\cusp = (L, \co, \cl)$ so that $\mathbb{A}_{\cusp} = \mathcal{I}_P^G(\ic_\cusp)$, where $\ic_\cusp$ is cuspidal in $\dbl(\cn_L)$.  Let $W(L) = N_G(L)/L$. This group is referred to as a relative Weyl group. Let $\textup{Irr}(W(L))$ denote the set of isomorphism classes of irreducible $W(L)$-representations. For each $\psi\in\irr(W(L))$, we fix a representative $V_\psi$. Lusztig proves in \cite[Theorem 9.2]{L1} that we have an algebra isomorphism: \begin{equation}\label{eq:endgenspr}\End(\mathbb{A}_{\cusp})\cong\ql[W(L)].\end{equation}This, together with the fact that $\perv_G(\cn)$ is a semisimple abelian category imply that we have a decomposition of $\mathbb{A}_{\cusp}$ into isotypic components indexed by the irreducible representations of $W(L)$
\[\mathbb{A}_{\cusp} \cong \bigoplus_{\psi\in \textup{Irr}(W(L))} \ic_\psi\otimes V_\psi.\] The assignment $\ic_\psi \mapsto V^*_\psi$ is known as the generalized Springer correspondence \cite{L1}, where $V^*_\psi$ is the contragredient representation of $V_\psi$. Hence, we have an equivalence \[\perv_G(\cn) \cong \prod_{\cusp/\sim}\rep W(L_\mathbf{c}).\]

A simple $G$-equivariant perverse sheaf on $\cn$ is either cuspidal or induced from a cuspidal perverse sheaf for a proper Levi subgroup of $G$. In \cite[Theorem 4.4]{L2} Lusztig proves the statement for character sheaves, but it is easy to see that it also applies in this setting. Thus, it is sufficient to classify the cuspidal data (up to equivalence) for each group, which Lusztig does in \cite{L1}.  We use this classification to prove the following proposition.
 
 \begin{proposition}\label{prop:preser} Let $\cusp = (L, \co, \cl)$ be a cuspidal datum for $G$.  Then $\mathcal{R}^G_P\mathcal{I}^G_P\ic_\cusp$ is a direct sum of copies of $\ic_\cusp.$ Moreover, the same statement holds if $\mathcal{R}^G_P$ is replaced by $\widetilde{\mathcal{R}}^G_P$.\end{proposition}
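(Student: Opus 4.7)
My strategy is to use the adjunction $(\mathcal{I}^G_P, \mathcal{R}^G_P)$ together with semisimplicity of $\perv_L(\cn_L)$ and the block decomposition of $\perv_G(\cn)$ coming from the generalized Springer correspondence. Since $\mathcal{R}^G_P$ is exact and $\perv_L(\cn_L)$ is semisimple, $\mathcal{R}^G_P \mathbb{A}_\cusp$ is a semisimple $L$-equivariant perverse sheaf on $\cn_L$, so it suffices to compute, for each simple $\mathcal{S} \in \perv_L(\cn_L)$, the multiplicity $\dim \Hom(\mathcal{S}, \mathcal{R}^G_P \mathbb{A}_\cusp)$. Adjunction rewrites this as $\dim \Hom(\mathcal{I}^G_P \mathcal{S}, \mathbb{A}_\cusp)$. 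Taking $\mathcal{S} = \ic_\cusp$ yields multiplicity $\dim \End(\mathbb{A}_\cusp) = |W(L)|$ by \eqref{eq:endgenspr}, so the task reduces to showing that no other simple $\mathcal{S}$ contributes.

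By the generalized Springer correspondence applied to $L$, every simple $\mathcal{S} \in \perv_L(\cn_L)$ occurs as a summand of a Lusztig sheaf $\mathcal{I}^L_{Q_0}(\ic_{\cusp_0})$ for some cuspidal datum $\cusp_0 = (L_0, \co_0, \cl_0)$ of $L$. Transitivity of induction, $\mathcal{I}^G_P \circ \mathcal{I}^L_{Q_0} \cong \mathcal{I}^G_{P_0}$ for a suitable parabolic $P_0 \subseteq P$ with Levi $L_0$, then shows that $\mathcal{I}^G_P \mathcal{S}$ is a summand of $\mathbb{A}^G_{\cusp_0}$, and therefore lies in the block of $\perv_G(\cn)$ indexed by the $G$-conjugacy class of $\cusp_0$. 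In order for $\Hom(\mathcal{I}^G_P \mathcal{S}, \mathbb{A}_\cusp)$ to be nonzero, this block must coincide with the block of $\cusp$, which forces $\cusp_0 \sim \cusp$. In particular $L_0$ is $G$-conjugate to $L$, and since $L_0 \subseteq L$ we must have $L_0 = L$, so $\mathcal{S}$ is itself cuspidal on $L$, say $\mathcal{S} = \ic_{(L, \co', \cl')}$ with $(L, \co', \cl') \sim (L, \co, \cl)$.

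The main obstacle is the remaining rigidity step: upgrading $(L, \co', \cl') \sim (L, \co, \cl)$ to the equality $(\co', \cl') = (\co, \cl)$. This is equivalent to saying that the relative Weyl group $W(L) = N_G(L)/L$ acts trivially on the set of cuspidal pairs supported on $\cn_L$. This is where Lusztig's explicit classification of cuspidal data in good characteristic \cite{L1} is essential: one inspects the list of cuspidals Levi by Levi, using the fact that distinct cuspidals on $L$ are distinguished by their central characters, and checks in each case that any element of $N_G(L)$ stabilizes each cuspidal pair up to $L$-equivariant isomorphism. Granted this, we conclude $(\co', \cl') = (\co, \cl)$ and hence $\mathcal{S} = \ic_\cusp$, completing the proof.

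The proof for $\widetilde{\mathcal{R}}^G_P$ is entirely parallel, using instead the adjunction $(\widetilde{\mathcal{R}}^G_P, \mathcal{I}^G_P)$ and the identity $\Hom(\widetilde{\mathcal{R}}^G_P \mathbb{A}_\cusp, \mathcal{S}) \cong \Hom(\mathbb{A}_\cusp, \mathcal{I}^G_P \mathcal{S})$; the block argument and the classification-based rigidity step then force every simple summand of $\widetilde{\mathcal{R}}^G_P \mathbb{A}_\cusp$ to be isomorphic to $\ic_\cusp$.
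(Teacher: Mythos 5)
Your argument is correct and follows essentially the same route as the paper's: adjunction plus transitivity of induction reduces everything to the partition of the simple objects of $\perv_G(\cn)$ by cuspidal data up to $G$-conjugacy, which forces $L_0=L$ and hence $\mathcal{S}\cong\ic_\cusp$. The ``rigidity step'' you single out as the main obstacle --- that $N_G(L)$ stabilizes each cuspidal pair on $\cn_L$ up to isomorphism --- needs no Levi-by-Levi inspection of the classification: it is already contained in \cite[Theorem 9.2]{L1}, the same result invoked for the isomorphism $\End(\mathbb{A}_{\cusp})\cong\ql[W(L)]$ in \eqref{eq:endgenspr}.
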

\begin{proof} The result relies on Lusztig's generalized Springer correspondence as a classification of the simple perverse sheaves on $\cn$. 

Let $S$ be a simple summand of $\mathcal{R}^G_P\mathcal{I}^G_P\ic_\cusp$ in $\perv_L(\cn_L)$. We have that $S$ is a direct summand of a Lusztig sheaf $\mathcal{I}^L_{Q}\ic_{\cusp'}$ on $\cn_L$, where $\cusp '$ is a cuspidal datum for $L$. Note that we allow the case where $Q = L$. Hence, $\Hom(S, \mathcal{R}^G_P\mathcal{I}^G_P\ic_\cusp)\neq 0$ implies \[\Hom(\mathcal{I}^L_{Q}\ic_{\cusp '}, \mathcal{R}^G_P\mathcal{I}^G_P\ic_\cusp)\neq 0.\] Let $\tilde{Q} = QU_P$, where $U_P$ is the unipotent radical of $P$. Then $\tilde{Q}$ is a parabolic subgroup of $G$. By adjunction and the transitivity of induction $\mathcal{I}^G_{\tilde{Q}} \cong \mathcal{I}^G_{P}\mathcal{I}^L_{Q}$ \cite[Proposition 4.2]{L2}, \[\Hom(\mathcal{I}^G_{\tilde{Q}}\ic_{\cusp '}, \mathcal{I}^G_P\ic_\cusp)\neq 0.\] This implies that $\mathcal{I}^G_{\tilde{Q}}\ic_{\cusp '}$ and  $\mathcal{I}^G_P\ic_\cusp$ have a simple summand in common. However,  Lusztig's generalized Springer correspondence partitions the set of simple $G$-equivariant perverse sheaves on $\cn$ into distinct classes, each labeled by a unique cuspidal datum up to $G$-conjugation. Hence, if $\mathcal{I}^G_{\tilde{Q}}\ic_{\cusp '}$ and  $\mathcal{I}^G_P\ic_\cusp$ have a simple summand in common, then it must be that \[\mathcal{I}^G_{\tilde{Q}}\ic_{\cusp '}\cong\mathcal{I}^G_P\ic_\cusp,\] each labeled by cuspidal data that are $G$-conjugate. Since $Q\subset L$, we have $L = Q$. Thus, $S$ is a summand of (and so, must be equal to) $\mathcal{I}^L_{Q}(\ic_{\cusp '}) \cong \ic_\cusp$. 

The case for $\widetilde{\mathcal{R}}^G_P$ follows in a similar manner.
\end{proof}

\section{Orthogonal decomposition}\label{sec:orthog}

In this section, we prove our main result: an orthogonal decomposition of the category $\dbg(\cn)$ into blocks, each corresponding to a cuspidal datum for $G$. 

\begin{definition}Let $\mathscr{T}$ be a triangulated category. We say that $\mathscr{T}_1$ and $\mathscr{T}_2$, two full triangulated subcategories of $\mathscr{T}$, are \textit{orthogonal} if for all objects $\mathscr{F}\in\mathscr{T}_1$ and $\mathscr{G}\in\mathscr{T}_2$, we have that \[\Hom_{\mathscr{T}}(\mathscr{F},\mathscr{G}) =  \Hom_{\mathscr{T}}(\mathscr{G}, \mathscr{F}) = 0.\] If we have a (finite) collection $\mathscr{T}_i$ of triangulated subcategories of $\mathscr{T}$ that are pairwise orthogonal and that generate $\mathscr{T}$ such that each $\mathscr{T}_i$ cannot be split further, then we call them \textit{blocks}. The equivalence \[\mathscr{T} \cong \bigoplus_{i}\mathscr{T}_i\] is called a \textit{block decomposition} of $\mathscr{T}$.\end{definition}

Suppose we have cuspidal perverse sheaves $\ic_\cusp$ on $\cn_L$ and $\ic_{\cusp'}$ on $\cn_{L'}$, where $L$ and $L'$ are the Levi factors of parabolics $P$ and $P'$ in $G$. Let $E = \cn_{P'\cap L}\times_{\cn_{L\cap L'}}\cn_{P\cap L'}$. 
\begin{proposition}\label{propeasycohovanish}If $P'\cap L \times P\cap L'$ is properly contained within $L\times L'$, then $\cH^i_c(E, \ic_\cusp\boxtimes \ic_{\cusp'}) = 0$ for all $i$. 
\end{proposition}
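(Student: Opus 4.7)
The plan is to use base change on the Cartesian square defining $E$ to rewrite $\cH^i_c(E, \ic_\cusp \boxtimes \ic_{\cusp '})$ in terms of a restriction of $\ic_\cusp$ along a proper parabolic of $L$, which will vanish by cuspidality. By the symmetry of the hypothesis, we may assume without loss of generality that $P' \cap L$ is a proper parabolic of $L$.

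Denote by $i_1 \colon \cn_{P'\cap L} \hookrightarrow \cn_L$ and $p_1 \colon \cn_{P'\cap L} \to \cn_{L\cap L'}$ the natural inclusion and projection, and write $i_2, p_2$ for the analogous maps on the other factor; let $q_1, q_2$ be the structural projections from $E$. The restriction of $\ic_\cusp \boxtimes \ic_{\cusp '}$ to $E$ is $q_1^* i_1^* \ic_\cusp \otimes q_2^* i_2^* \ic_{\cusp '}$. Base change for the defining Cartesian square gives $q_{2,!}\,q_1^* \cong p_2^*\,p_{1,!}$, which combined with the projection formula yields
\[
\cH_c^i(E,\ \ic_\cusp \boxtimes \ic_{\cusp '}) \;\cong\; \cH_c^i\bigl(\cn_{P\cap L'},\ p_2^*\,p_{1,!}(i_1^*\ic_\cusp)\,\otimes\, i_2^*\ic_{\cusp '}\bigr),
\]
so it suffices to prove $p_{1,!}(i_1^*\ic_\cusp) = 0$.

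To see this, I claim that, up to a shift by $[\dim \mathfrak{u}_{P'\cap L}]$, the complex $p_{1,!}(i_1^*\ic_\cusp)$ is the image of $\widetilde{\mathcal{R}}^L_{P'\cap L}(\ic_\cusp)$ under the induction equivalence $\dbl(\mathcal{C}^L_{P'\cap L}) \cong \mathrm{D}^{\mathrm{b}}_{L\cap L'}(\cn_{L\cap L'})$: the fiber of $\mathcal{C}^L_{P'\cap L} \to L/(P'\cap L)$ over the identity coset is $\cn_{L\cap L'}$, the corresponding fiber of $\widetilde{\cn_L}^{P'\cap L}$ is $\cn_{P'\cap L}$, and $\mu, \pi$ restrict to $i_1, p_1$ on these fibers, so base change along the inclusion of the fibers yields the identification. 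Now $\ic_\cusp$ is cuspidal, so by Remark \ref{rem:dualcusp} so is $\vd\ic_\cusp$, whence $\mathcal{R}^L_{P'\cap L}(\vd\ic_\cusp) = 0$; since Verdier duality interchanges $\mathcal{R}^L_{P'\cap L} = \pi_*\mu^![-d]$ and $\widetilde{\mathcal{R}}^L_{P'\cap L} = \pi_!\mu^*[d]$, this gives $\widetilde{\mathcal{R}}^L_{P'\cap L}(\ic_\cusp) = 0$, and hence $p_{1,!}(i_1^*\ic_\cusp) = 0$. The main point requiring care is the identification of $p_{1,!}(i_1^*\ic_\cusp)$ with $\widetilde{\mathcal{R}}^L_{P'\cap L}(\ic_\cusp)$ via induction equivalence and base change; once that is in hand, cuspidality closes the argument.
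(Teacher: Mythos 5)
Your proof is correct and takes essentially the same route as the paper: both apply proper base change to the Cartesian square defining $E$ so as to identify $\cH^\bullet_c(E,\ic_\cusp\boxtimes\ic_{\cusp'})$ with (a fiber of) the non-equivariant parabolic restriction functor $\widetilde{\mathcal{R}}$ applied to a cuspidal object, which vanishes. The only cosmetic difference is that the paper base-changes along the diagonal $\Delta$ and uses cuspidality of $\ic_\cusp\boxtimes\ic_{\cusp'}$ for the proper product parabolic $P'\cap L\times P\cap L'\subset L\times L'$, whereas you base-change along one projection and, after a harmless symmetry reduction, use cuspidality of a single factor (making explicit the Verdier-duality step showing $\widetilde{\mathcal{R}}$ also kills cuspidals, which the paper leaves implicit).
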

\begin{proof} Consider the following diagram. 

\begin{center}
\begin{tikzcd}
\cn_L\times\cn_{L'} & \\
 \cn_{P'\cap L}\times\cn_{P\cap L'}\arrow[hookrightarrow]{u}{i}\arrow{r}{\varpi} &  \cn_{L\cap L'}\times\cn_{L\cap L'} \\ 
  E=\cn_{P'\cap L}\times_{\cn_{L\cap L'}}\cn_{P\cap L'}\arrow[hookrightarrow]{u}{s}\arrow{r}{\tilde{\varpi}} & \cn_{L\cap L'}\arrow[hookrightarrow]{u}{\Delta}
\end{tikzcd}
\end{center}

The maps $i$ and $s$ are inclusions, and $\Delta$ is diagonal inclusion.  Note that $P'\cap L$ (resp. $P\cap L'$) is a parabolic subgroup of $L$ (resp. $L'$) with Levi factor $L\cap L'$.  Thus, $\varpi$ is induced by the quotient $P'\cap L\times P\cap L'\rightarrow L\cap L'\times L\cap L'$ with unipotent kernel, $\tilde{\varpi}$ is its restriction, and the square commutes.

By base-change, we have an isomorphism $\tilde{\varpi}_! s^*\cong \Delta^*\varpi_!$. The functor $\varpi_!i^*$ is easily seen to be the non-equivariant version of restriction $\widetilde{\mathcal{R}}$ as in \eqref{eq:functors}.  Hence, $\varpi_!i^*(\ic_\cusp\boxtimes \ic_{\cusp'}) = 0$ since $\ic_\cusp\boxtimes \ic_{\cusp'}$ is cuspidal on $\cn_L\times\cn_{L'}$ and $P'\cap L \times P\cap L'$ is a proper parabolic in $L\times L'$. Hence, we have that $0 = \Delta^*\varpi_!i^*(\ic_\cusp\boxtimes \ic_{\cusp'}) = \tilde{\varpi}_!s^*i^*(\ic_\cusp\boxtimes \ic_{\cusp'})$. Therefore, $\cH^i_{c}(E, \ic_\cusp\boxtimes \ic_{\cusp'}) = 0$ for all $i$. 
\end{proof}

Let $Z = \widetilde{\cn}^P\times_{\cn}\widetilde{\cn}^{P'}$ be a generalized Steinberg variety. 
We use $\ic_\cusp$ and $\ic_{\cusp'}$ to also denote the $G$-equivariant perverse sheaves on $\cc_P$ and $\cc_{P'}$ which correspond to the cuspidal perverse sheaves on $\cn_L$ and $\cn_{L'}$ under induction equivalence.  Consider the composition \begin{equation}\label{map:tau}\tau : \widetilde{\cn}^P\times_{\cn}\widetilde{\cn}^{P'}\stackrel{i}{\hookrightarrow}\widetilde{\cn}^P\times\widetilde{\cn}^{P'}\stackrel{\Pi := \pi\times\pi'}{\longrightarrow}\cc_P\times\cc_{P'}.\end{equation} Our goal in the following proposition is to prove that the cohomology groups $\cH_G^i(Z, \tau^! \ic_{\cusp}\boxtimes \ic_{\cusp'})$ vanish for all $i$ when $\ic_{\cusp}$ and $\ic_{\cusp'}$ arise from cuspidals with non-conjugate Levis.  The outline of our proof follows that of a similar result due to Lusztig \cite[Prop. 7.2]{L3} in the setting of character sheaves. 

\begin{proposition}\label{propcohovanish} Suppose $P$ and $P'$ are parabolics in $G$ such that their Levi factors $L$ and $L'$ are non-conjugate.  Let $Z$ be as above.  Then, $\cH^i(Z, \tau^! \ic_{\cusp}\boxtimes \ic_{\cusp'}) = 0$ for all $i$. \end{proposition}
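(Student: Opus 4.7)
The plan is to stratify $Z$ by $(P, P')$-double cosets in $G$ and, on each stratum, reduce the cohomology computation to the vanishing already established in Proposition~\ref{propeasycohovanish}. Composing the closed immersion $i \colon Z \hookrightarrow \widetilde{\cn}^P \times \widetilde{\cn}^{P'}$ with the projection to $G/P \times G/P'$ produces a $G$-equivariant map $\rho \colon Z \to G/P \times G/P'$. Choosing representatives $g_1, \ldots, g_r$ for the double cosets $P \backslash G / P'$, the target decomposes as $\bigsqcup_{j} G \cdot (eP, g_j P')$, and pulling back yields a locally closed $G$-stable stratification $Z = \bigsqcup_{j} Z_j$ with $Z_j = \rho^{-1}(G \cdot (eP, g_j P'))$. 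The distinguished triangles associated with this stratification reduce the statement to showing, for each $j$, that the analogous cohomology on $Z_j$ vanishes.

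On the stratum $Z_j$, write $L'_j := g_j L' g_j^{-1}$ and $P'_j := g_j P' g_j^{-1}$. The map $Z_j \to G/(P \cap P'_j)$ induced by $\rho$ is a $G$-equivariant fibration whose fiber at the base point consists of the pairs $(x_1, x_2)$ with $x_1 \in \mathrm{Lie}(U_P) + \cn_L$, $x_2 \in \mathrm{Lie}(U_{P'}) + \cn_{L'}$, and $x_1 = \mathrm{Ad}(g_j)(x_2)$. Projecting $x_1$ and $\mathrm{Ad}(g_j)(x_2)$ to their Levi components in $L$ and $L'_j$ (both of which land in $L \cap L'_j$) identifies this fiber, up to a contractible affine factor coming from the unipotent radicals, with the fiber product $E_j := \cn_{P'_j \cap L} \times_{\cn_{L \cap L'_j}} \cn_{P \cap L'_j}$. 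Smooth/proper base change along the fibration, together with the behavior of $\tau^!$ under the stratification, then rewrites the cohomology of the relevant restriction as $\cH^*_c(E_j, \ic_\cusp \boxtimes (\mathrm{Ad}\, g_j)^* \ic_{\cusp'})$, up to a cohomological shift.

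The decisive input is the non-conjugacy of $L$ and $L'$: because $L$ and $L'_j$ are also non-conjugate in $G$ for every $g_j$, one cannot have both $L \subseteq P'_j$ and $L'_j \subseteq P$ simultaneously, since standard Levi-subgroup theory of parabolics would then force $G$-conjugacy of $L$ and $L'_j$. Consequently $P'_j \cap L \times P \cap L'_j$ is a proper subgroup of $L \times L'_j$, and Proposition~\ref{propeasycohovanish}, applied to the pair of cuspidal perverse sheaves $\ic_\cusp$ on $\cn_L$ and $(\mathrm{Ad}\, g_j)^* \ic_{\cusp'}$ on $\cn_{L'_j}$, yields the required stratum-wise vanishing. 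The main obstacle in executing this plan is the geometric bookkeeping in the middle step: correctly identifying each $Z_j$ as a fibration with the stated fiber, checking that $\tau^!$ interacts compatibly with the stratification, and tracking the shifts through base change. All of this closely parallels Lusztig's analogous calculation for character sheaves in \cite[Prop.~7.2]{L3}, which we take as the template.
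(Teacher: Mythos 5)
Your overall strategy---reduce to one point in each $G$-orbit of $G/P\times G/P'$, identify the fibre there with $E$ up to an affine factor, and invoke Proposition~\ref{propeasycohovanish}---is the same skeleton as the paper's proof. But there is a genuine gap in what you actually prove versus what is claimed. The proposition concerns $\cH^i(Z,\tau^!\,\ic_\cusp\boxtimes\ic_{\cusp'})$, i.e.\ ordinary cohomology of a $!$-pullback, while your stratum-by-stratum computation lands on $\cH^\bullet_c(E_j,\cdot)$, which is what Proposition~\ref{propeasycohovanish} controls; that proposition says nothing about $\cH^\bullet(E_j,\cdot)$. So at best your argument establishes the Verdier-dual statement $\cH^i_c(Z,\tau^*(\ic_\cusp\boxtimes\ic_{\cusp'}))=0$. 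The paper deliberately proves exactly that first, and only at the very end converts it into the asserted statement by applying the compact-support argument to $\vd\ic_\cusp\boxtimes\vd\ic_{\cusp'}$ (which is again a pair of cuspidals by Remark~\ref{rem:dualcusp}) and dualizing: $p_!\tau^*\vd(\ic_\cusp\boxtimes\ic_{\cusp'})=\vd\, p_*\tau^!(\ic_\cusp\boxtimes\ic_{\cusp'})$. You need to add this step; without it the $!$/$*$ mismatch is unresolved, and your dévissage (which you set up for $\tau^!$ but then run with $*$-restrictions and $\cH_c$) does not close up.

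The second issue is the step you defer as ``geometric bookkeeping.'' Your plan requires the fibre of $Z_j\to G/(P\cap P'_j)$ to be an affine-space bundle over $E_j$ with the sheaf pulled back along the projection---smoothness of the projection alone is not enough, since for a merely smooth map $\alpha$ the functor $\alpha_!\alpha^*$ is not a shift of the identity. In the present linear setting the fibre is indeed $E_j\times(\mathfrak{u}_P\cap\mathfrak{u}_{P'_j})$ (decompose $\mathfrak{p}\cap\mathfrak{p}'_j$ into root spaces for a common maximal torus of $L$ and $L'_j$, which is why the double-coset representative must be chosen so that such a torus exists---a point you omit), so your route can be completed. The paper avoids verifying this product structure: it only uses that $\alpha$ is smooth, and replaces ``integrate over the fibres'' by Braden's hyperbolic localization for a contracting $\mathbb{G}_m$-action, which identifies $\cH^\bullet_c$ of the contractible space with a costalk at the fixed point and commutes with $\alpha^!$ for formal reasons. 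Either mechanism works, but one of them must actually be supplied; as written, the decisive reduction from the fibre to $E_j$ is asserted rather than proved. Your justification that $P'_j\cap L\times P\cap L'_j$ is proper in $L\times L'_j$ is correct and is a detail the paper leaves implicit.
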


\begin{proof} We will first show an analogous statement for cohomology with compact support. Let $x\in G/P\times G/P'$, and consider the following diagram.

\begin{center}
\begin{tikzcd}
\phi^{-1}(x)\arrow{d}{\phi_x}\arrow[hookrightarrow]{r}{\iota} &  Z\arrow{d}{\phi}\\
 \{x\}\arrow[hookrightarrow]{r}{} & G/P\times G/P'\arrow{d}{}\\
  & \pt
\end{tikzcd}
\end{center} Clearly, it is enough to show that $\phi_!\tau^*( \ic_\cusp\boxtimes\ic_{\cusp'}) = 0$. This holds if all the stalks of $\phi_!\tau^*(\ic_\cusp\boxtimes\ic_{\cusp'})$ vanish. This is equivalent to $\phi_{x\,!}(\iota^*\tau^*(\ic_\cusp\boxtimes\ic_{\cusp'}))=0$ for all $x\in G/P\times G/P'$ by base change. In other words, it is enough to show that $\cH^\bullet_c(\phi^{-1}(x), \tau^*(\ic_\cusp\boxtimes\ic_{\cusp'}))=0$ for all $x$. Since $\phi_!\tau^*(\ic_\cusp\boxtimes\ic_{\cusp'})$ is $G$-equivariant, it is enough to check \[\phi_{x\, !} (\iota^*\tau^*(\ic_\cusp\boxtimes\ic_{\cusp'}))=0\] for a single $x$ in each $G$-orbit of $G/P\times G/P'$.
Let $n\in G$ be such that $L$ and $nL'n^{-1}$ share a maximal torus and consider $Z(n) = \phi^{-1}((P, nP'))$, a subvariety of $Z$.  We must show that $\cH^i_{c}(Z(n), \tau^*(\ic_{\cusp}\boxtimes \ic_{\cusp'})) = 0$ for all $i$. Consider the following commutative diagram where $L'' = nL'n^{-1}, P'' = nP'n^{-1}, f: \cn_{L''} \stackrel{\sim}{\rightarrow}\cn_{L'},$ and $E=\cn_{P''\cap L}\times_{\cn_{L\cap L''}}\cn_{P\cap L''} $.\begin{center}
\begin{tikzpicture}[description/.style={fill=white,inner sep=1.5pt}] 
\matrix (m) [matrix of math nodes, row sep=2.5em,
column sep=2em, text height=1ex, text depth=0.25ex, nodes in empty cells]
{  Z &\cc_P\times \cc_{P'} &\cc_P\times \cc_{P''}  \\ 
      & & \cn_L\times\cn_{L''}\\
      & & \cn_{P''\cap L}\times\cn_{P\cap L''}\\
      Z(n) &\cn_P\cap\cn_{P''}& E\\};
\path[->,font=\scriptsize]
(m-1-1) edge node[above]{$\tau$}(m-1-2)
 (m-1-2) edge node[above]{$\sim$}(m-1-3)
(m-4-1) edge node[above]{$\sim$} (m-4-2)
(m-4-2) edge node[above]{$\alpha$} (m-4-3);

\path[right hook->,font=\scriptsize]
(m-4-1) edge (m-1-1)
(m-2-3) edge (m-1-3)
(m-3-3) edge (m-2-3)
(m-4-3) edge (m-3-3);
\end{tikzpicture}\end{center}
Let $\ic_{\cusp''} = f^*\ic_{\cusp'}$. Hence, $\cH^i_c(Z(n), \tau^*(\ic_{\cusp}\boxtimes \ic_{\cusp'}))\cong \cH^i_c(Z(n), \alpha^*(\ic_\cusp\boxtimes \ic_{\cusp''}))$. The map $\alpha$ is smooth of relative dimension $d$ where $d= \dim U_{P\cap P''},$ hence $\alpha^*[2d]\cong\alpha^!$. 

Finally, we apply a special case of Braden's hyperbolic localization: equation (1) in \cite[Section 3]{Br}. In the diagram below, we let the multiplicative group $\mathbb{G}_m$ act on all varieties with compatible positive weights. Let $e: \{(0,0)\}\hookrightarrow E$ be the inclusion of the fixed point of this action, and let $\ell: E\rightarrow\{(0,0)\}$ be the map that sends every point to its limit. Note that Proposition \ref{propeasycohovanish} implies that $\ell_! \ic_\cusp\boxtimes \ic_{\cusp''} = 0$.  

 \begin{center}
\begin{tikzpicture}[description/.style={fill=white,inner sep=1.5pt}] 
\matrix (m) [matrix of math nodes, row sep=2.5em,
column sep=2em, text height=1ex, text depth=0.25ex, nodes in empty cells]
{   \{0\} & \{(0, 0)\}   \\ 
          \cn_P\cap\cn_{P''} & E\\};
\path[->,font=\scriptsize, >=angle 90]
(m-1-1) edge[bend right=30]  node[left]{$b$} (m-2-1)
		edge node[above]{=}(m-1-2)
(m-2-1) edge node[above]{$\alpha$}(m-2-2)
(m-2-2) edge[bend right=30] node[right]{$\ell$}(m-1-2)
(m-2-1) edge[bend right = 30] node[right]{$a$}(m-1-1);

\path[right hook->,font=\scriptsize, >=angle 90]
(m-1-2) edge[bend right=30] node[left]{$e$}(m-2-2);
\end{tikzpicture}
\end{center} Then hyperbolic localization implies that we have isomorphisms $e^!\cong \ell_!$ and $a_! \cong b^!$. Furthermore, the diagram commutes. Combining these, we see that 

\[a_!\alpha^!\cong b^!\alpha^! \cong e^! \cong \ell_!\] Now, we apply this to the cuspidal perverse sheaf $\ic_\cusp\boxtimes \ic_{\cusp''}$ to see that  \[ a_!\alpha^! \ic_\cusp\boxtimes \ic_{\cusp''} \cong \ell_! \ic_\cusp\boxtimes \ic_{\cusp''} = 0. \] Thus, we've shown $\cH^i_{c}(Z(n), \tau^*( \ic_{\cusp}\boxtimes \ic_{\cusp'})) = 0$ for all $i$ and $n$, which implies $\cH^i_{c}(Z, \tau^*(\ic_{\cusp}\boxtimes \ic_{\cusp'})) = 0$. 
Now we show that this implies $\cH^i(Z, \tau^* (\ic_{\cusp}\boxtimes \ic_{\cusp'})) = 0$ for all $i$.  If $\ic_{\cusp}$ and $\ic_{\cusp'}$ are distinct cuspidals, then $\vd \ic_{\cusp}$ and $\vd \ic_{\cusp'}$ are also distinct cuspidals by Remark \ref{rem:dualcusp}. By the above argument, $p_!\tau^* \vd \ic_{\cusp}\boxtimes \vd \ic_{\cusp'} = 0$ where $p: Z\rightarrow \pt$. Hence, $p_!\tau^* \vd(\ic_{\cusp}\boxtimes \ic_{\cusp'}) = \vd p_*\tau^! \ic_{\cusp}\boxtimes \ic_{\cusp'} = 0$, which implies that $p_*\tau^! \ic_{\cusp}\boxtimes \ic_{\cusp'} = 0.$
\end{proof}

Since forgetting equivariance commutes with sheaf functors for equivariant maps, the following corollary also holds.

\begin{corollary}\label{equivversion}
Suppose $P$ and $P'$ are parabolics in $G$ such that their Levi factors $L$ and $L'$ are non-conjugate.  Then, $\cH_G^i(Z, \tau^! \ic_{\cusp}\boxtimes \ic_{\cusp'}) = 0$ for all $i$. 
\end{corollary}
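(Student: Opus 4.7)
The plan is to reduce the equivariant statement directly to the non-equivariant Proposition \ref{propcohovanish} via the forgetful functor and a standard spectral sequence argument. The map $\tau$ constructed in \eqref{map:tau} is $G$-equivariant (being built from $G$-equivariant inclusions and the $G$-equivariant smooth map $\Pi = \pi \times \pi'$), and $\ic_\cusp \boxtimes \ic_{\cusp'}$ is naturally a $G$-equivariant perverse sheaf on $\cc_P \times \cc_{P'}$. Hence $\tau^!(\ic_\cusp \boxtimes \ic_{\cusp'})$ lifts canonically to $\dbg(Z)$, and its image under the forgetful functor $\mathrm{For}: \dbg(Z) \to \db(Z)$ coincides with the non-equivariant complex whose hypercohomology was shown to vanish in Proposition \ref{propcohovanish}, because $\mathrm{For}$ commutes with $\tau^!$ and with external tensor product.

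To promote the vanishing of ordinary cohomology to a vanishing of equivariant cohomology, I would invoke the Borel construction: for any $\mathcal{F} \in \dbg(Z)$ one has $\cH^*_G(Z, \mathcal{F}) \cong \cH^*(EG \times^G Z, \widetilde{\mathcal{F}})$, where $\widetilde{\mathcal{F}}$ is the descent of $\mathcal{F}$ to the Borel quotient. The Leray spectral sequence for the projection $p: EG \times^G Z \to BG$ reads
\[ E_2^{p,q} = \cH^p(BG, R^q p_* \widetilde{\mathcal{F}}) \Longrightarrow \cH^{p+q}_G(Z, \mathcal{F}). \]
Since $G$ is connected, $BG$ is simply connected, so the sheaves $R^q p_* \widetilde{\mathcal{F}}$ are constant local systems on $BG$ with fiber $\cH^q(Z, \mathrm{For}(\mathcal{F}))$. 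Applying this with $\mathcal{F} = \tau^!(\ic_\cusp \boxtimes \ic_{\cusp'})$ and invoking Proposition \ref{propcohovanish}, every one of these fibers vanishes. Therefore the entire $E_2$-page is zero, and we conclude $\cH^n_G(Z, \tau^!(\ic_\cusp \boxtimes \ic_{\cusp'})) = 0$ for all $n$.

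There is no real obstacle; both the compatibility of $\tau^!$ and $\boxtimes$ with the forgetful functor and the Borel-construction spectral sequence are standard facts in the theory of the equivariant derived category \cite{BL}, which is exactly the point the authors flag in the sentence preceding the corollary. The only substantive work has already been done in Proposition \ref{propcohovanish}; the corollary is its formal upgrade.
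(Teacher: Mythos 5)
Your argument is correct and is essentially the paper's: the authors dispose of the corollary in one sentence by noting that the forgetful functor commutes with the sheaf operations, so the vanishing is inherited from Proposition \ref{propcohovanish}. Your Borel-construction spectral sequence (with all $E_2$-terms killed by the non-equivariant vanishing) is just an explicit implementation of that same formal upgrade, so there is nothing to add.
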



Now we come to the main theorem of this section: an orthogonal decomposition of the category $\dbg(\cn)$. The proof of this decomposition mostly follows from arguments found in \cite{L1} and \cite{L2}. 
\begin{theorem} \label{thm:orthog}We have an orthogonal decomposition indexed by cuspidal data up to equivalence \[\dbg(\cn) \cong \bigoplus_{\cusp/\sim} \dbg(\cn, \mathbb{A}_{\cusp}).\] 
\end{theorem}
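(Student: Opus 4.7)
\emph{Plan.} I would organize the proof around two tasks: first, showing that the subcategories $\dbg(\cn, \mathbb{A}_\cusp)$ collectively generate $\dbg(\cn)$; and second, verifying that distinct such subcategories are orthogonal. Generation is essentially immediate: Lusztig's generalized Springer correspondence identifies every simple $G$-equivariant perverse sheaf on $\cn$ with a direct summand of exactly one $\mathbb{A}_\cusp$ (up to equivalence of $\cusp$), and since $\perv_G(\cn)$ is semisimple while $\dbg(\cn)$ is generated by its perverse heart, the simple summands of the various $\mathbb{A}_\cusp$ generate the whole equivariant derived category. For orthogonality, since $\dbg(\cn, \mathbb{A}_\cusp)$ is generated as a triangulated subcategory by direct summands of the perverse sheaf $\mathbb{A}_\cusp$, it suffices to verify $\Hom^\bullet_{\dbg(\cn)}(\mathbb{A}_\cusp, \mathbb{A}_{\cusp'}) = 0$ whenever $\cusp \not\sim \cusp'$. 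I would split into two cases depending on whether the Levi subgroups of $\cusp$ and $\cusp'$ are $G$-conjugate.

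\textbf{Non-conjugate Levis.} Using the adjunction $(\mathcal{I}^G_P, \mathcal{R}^G_P)$, proper base change along the Cartesian square defining $Z = \widetilde{\cn}^P \times_\cn \widetilde{\cn}^{P'}$ (so that $\mu^!\mu'_! \cong p_! p'^!$), the smoothness of the projections $\pi, \pi'$, and the identity $\dgHom(A, B) \cong \vd(A \otimes \vd B)$ for constructible sheaves, I would identify the Hom space, up to an overall shift, with
\[\Hom^\bullet_{\dbg(\cn)}(\mathbb{A}_\cusp, \mathbb{A}_{\cusp'}) \;\cong\; H^\bullet_G\bigl(Z,\, \tau^!(\vd\ic_\cusp \boxtimes \ic_{\cusp'})\bigr).\]
Since $\vd\ic_\cusp$ is itself cuspidal (Remark \ref{rem:dualcusp}) and the Levis are non-conjugate, Corollary \ref{equivversion} applied to the pair $(\vd\ic_\cusp, \ic_{\cusp'})$ yields the vanishing.

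\textbf{Conjugate Levis.} By replacing $\cusp'$ with an equivalent (that is, $G$-conjugate) cuspidal datum, I may assume $L = L'$ and take $P = P'$, since induction depends only on the Levi. Because $\cusp \not\sim \cusp'$, the cuspidal perverse sheaves $\ic_\cusp, \ic_{\cusp'}$ on $\cn_L$ are distinct. Proposition \ref{prop:preser} identifies $\mathcal{R}^G_P \mathcal{I}^G_P \ic_{\cusp'}$ as a direct sum of copies of $\ic_{\cusp'}$, so adjunction gives
\[\Hom^\bullet(\mathbb{A}_\cusp, \mathbb{A}_{\cusp'}) \;\cong\; \Hom^\bullet_{\dbl(\cn_L)}(\ic_\cusp, \ic_{\cusp'})^{\oplus n}\]
for some $n$, and this vanishes by Proposition \ref{ccHomvanish} since distinct cuspidals on $\cn_L$ have distinct central characters, as recalled in the introduction. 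The main obstacle I anticipate is the non-conjugate case: recasting $\Hom^\bullet(\mathbb{A}_\cusp, \mathbb{A}_{\cusp'})$ precisely as a $\tau^!$-cohomology group in the form required by Corollary \ref{equivversion} calls for careful tracking of shifts and Verdier dualities through the six-functor formalism, even though no individual manipulation is exotic.
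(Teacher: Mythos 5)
Your proposal is correct and follows essentially the same route as the paper: the same case split on whether the Levis are conjugate, the same identification $\Hom^\bullet(\mathbb{A}_\cusp,\mathbb{A}_{\cusp'})\cong \cH^\bullet_G(Z,\tau^!(\vd\ic_\cusp\boxtimes\ic_{\cusp'}))$ (which the paper simply cites from Chriss--Ginzburg, Lemma 8.6.1, rather than rederiving) combined with Corollary \ref{equivversion} in the non-conjugate case, and Proposition \ref{prop:preser} plus distinctness of central characters and Proposition \ref{ccHomvanish} in the conjugate case. The only cosmetic difference is that you make the generation step explicit, which the paper leaves to the classification recalled in Section \ref{sec:prelim}.
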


\begin{proof} Let $\cusp=(L, \co, \cl)$ and $\cusp'=(L', \co', \cl')$ be two cuspidal data such that $\cusp\not\sim\cusp'$ with Lusztig sheaves $\mathbb{A}_{\cusp}$ and $\mathbb{A}_{\cusp'}$. We want to show that the two triangulated categories $\dbg(\cn, \mathbb{A}_{\cusp})$ and $\dbg(\cn, \mathbb{A}_{\cusp'})$ are orthogonal in $\dbg(\cn)$. It is sufficient to show that $\Hom^i(S, S') = \Hom^i(S', S) = 0$ for all $i\in\mathbb{Z}$ and all simple summands $S$ of $\mathbb{A}_{\cusp}$ and $S'$ of $\mathbb{A}_{\cusp'}$. 

We have two cases to consider: the Levis $L$ and $L'$ are either conjugate or not. In the second case, we apply equation (8.6.4) from \cite[Lemma 8.6.1]{CG} to yield \[\Hom^i(\mathbb{A}_{\cusp}, \mathbb{A}_{\cusp'})\cong \cH^i_{G}(Z, \tau^!(\vd\ic_\cusp\boxtimes\ic_{\cusp'})),\] which vanishes by Corollary \ref{equivversion}.


In the first case,  \[\Hom^i(\mathbb{A}_{\cusp'}, \mathbb{A}_{\cusp}) \cong \Hom^i(\ic_{\cusp'},\mathcal{R}^G_P\mathcal{I}^G_{P}\ic_\cusp).\] Here, we apply Proposition \ref{prop:preser} to see that $\mathcal{R}^G_P\mathcal{I}^G_{P}\ic_\cusp$ is a finite direct sum of copies of $\ic_\cusp$. Hence, it is sufficient to see that $\Hom^i(\ic_{\cusp'},\ic_\cusp) = 0$ for non-isomorphic cuspidals $\ic_{\cusp'}$ and $\ic_\cusp$. Lusztig proves in \cite{L1} that non-isomorphic cuspidal perverse sheaves have distinct central characters. Therefore we may apply  Proposition \ref{ccHomvanish} to complete the proof.\end{proof}

Let $u: Y\hookrightarrow\cn$ be a $G$-stable locally closed subvariety. Fix a Lusztig sheaf $\mathbb{A}_{\cusp}$. We define $\dbg(Y, \mathbb{A}_{\cusp})$ as the triangulated subcategory of $\dbg(Y)$ generated by restrictions $u^*S$ for all simple summands $S$ of $\mathbb{A}_{\cusp}$. (Note that $\dbg(Y, \mathbb{A}_{\cusp})$ may become trivial.)
\begin{corollary}\label{cor:restr_orthog} Let $Y$ be a $G$-stable locally closed subvariety of $\cn$. Then restriction to $Y$ preserves orthogonality. That is, we have an equivalence \[\dbg(Y) \cong \bigoplus_{\cusp/\sim} \dbg(Y, \mathbb{A}_{\cusp}).\]
\end{corollary}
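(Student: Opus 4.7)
My plan is to transport the block decomposition of Theorem~\ref{thm:orthog} from $\cn$ down to $Y$ by pulling back its projection functors through the adjunction $(u_!, u^*)$. The key technical input is the identity $u^* u_! \cong \mathrm{id}_{\dbg(Y)}$, which holds for any locally closed $G$-equivariant embedding, since $u$ factors as an open immersion composed with a closed immersion and the identity holds for each.

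Let $P_{\cusp} \colon \dbg(\cn) \to \dbg(\cn)$ denote the triangulated idempotent endofunctor projecting onto $\dbg(\cn, \mathbb{A}_{\cusp})$ supplied by Theorem~\ref{thm:orthog}; thus $P_{\cusp} P_{\cusp'} = 0$ for $\cusp \not\sim \cusp'$, $P_{\cusp}^2 \cong P_{\cusp}$, and $\bigoplus_{\cusp} P_{\cusp} \cong \mathrm{id}_{\dbg(\cn)}$. Define $P^Y_{\cusp} := u^* \circ P_{\cusp} \circ u_!$. The relation $\bigoplus_{\cusp} P^Y_{\cusp} \cong \mathrm{id}_{\dbg(Y)}$ is immediate since $u^*$ commutes with direct sums. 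For idempotency and orthogonality of the $P^Y_{\cusp}$, the key observation is that for any $\mathscr{F} \in \dbg(\cn)$ the cone $C_{\mathscr{F}}$ of the counit $u_! u^* \mathscr{F} \to \mathscr{F}$ satisfies $u^* C_{\mathscr{F}} = 0$, so also $u^*(P_{\cusp} C_{\mathscr{F}}) = 0$, because $P_{\cusp} C_{\mathscr{F}}$ is a direct summand of $C_{\mathscr{F}}$ and direct summands cannot enlarge support. Setting $\mathscr{F} := P_{\cusp'}(u_! \mathscr{G})$, applying $P_{\cusp}$ to the distinguished triangle $u_! u^* \mathscr{F} \to \mathscr{F} \to C_{\mathscr{F}}$, and then applying $u^*$, the idempotent/orthogonal relations on $\cn$ translate directly into $P^Y_{\cusp} \circ P^Y_{\cusp'} \cong P^Y_{\cusp}$ when $\cusp \sim \cusp'$ and $0$ otherwise.

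It remains to identify the essential image of $P^Y_{\cusp}$ with $\dbg(Y, \mathbb{A}_{\cusp})$. Since $u^*$ sends each simple summand $S$ of $\mathbb{A}_{\cusp}$ to a generator $u^* S$ of $\dbg(Y, \mathbb{A}_{\cusp})$ and is triangulated, it restricts to a functor $\dbg(\cn, \mathbb{A}_{\cusp}) \to \dbg(Y, \mathbb{A}_{\cusp})$, giving one inclusion; the reverse follows on generators from the identity $P^Y_{\cusp}(u^* S) \cong u^* S$, a direct consequence of the same triangle argument. With a complete system of orthogonal triangulated idempotent endofunctors in place, naturality of the decomposition $\mathrm{id}_{\dbg(Y)} \cong \bigoplus_{\cusp} P^Y_{\cusp}$ forces the vanishing of $\Hom$ groups between distinct blocks, completing the orthogonal decomposition. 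The main obstacle is the support-preservation step used for the projections: it is essential that $P_{\cusp}$ is a direct-summand functor rather than some more elaborate truncation, so that it cannot enlarge supports, and this is precisely what allows the block decomposition on $\cn$ to descend to the locally closed subvariety $Y$; everything else is formal manipulation of adjunctions and distinguished triangles.
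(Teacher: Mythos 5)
Your overall strategy here is genuinely different from the paper's: the paper proves this corollary by induction on the number of $G$-orbits in $\bar Y\setminus Y$, following the proof of Theorem~5.1 in [A], whereas you try to transport the projection functors $P_{\cusp}$ of Theorem~\ref{thm:orthog} directly to $\dbg(Y)$. The architecture is attractive and, suitably repaired, does work; but as written it rests on a non-existent adjunction. For a locally closed immersion $u$ that is not open there is no adjunction $(u_!,u^*)$ and no natural counit $u_!u^*\mathscr{F}\to\mathscr{F}$: the correct adjoint pairs are $(u_!,u^!)$ and $(u^*,u_*)$. Writing $u=i\circ j$ with $j$ open and $i$ closed, one has $u_!u^*\mathscr{F}=i_*j_!j^*i^*\mathscr{F}$, which maps naturally to $i_*i^*\mathscr{F}$, while the natural map relating $i_*i^*\mathscr{F}$ to $\mathscr{F}$ is the unit $\mathscr{F}\to i_*i^*\mathscr{F}$ and points the wrong way. (For $u$ a closed immersion of a proper subvariety of an irreducible $X$ one has $\Hom(u_*u^*\constant_X,\constant_X)=H^0_Y(X)=0$, so no candidate counit exists at all.) Hence the distinguished triangle $u_!u^*\mathscr{F}\to\mathscr{F}\to C_{\mathscr{F}}$, on which your idempotency, orthogonality, and essential-image computations all rest, does not exist.

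The repair is to run the whole argument with $u_*$ in place of $u_!$. The identity $u^*u_*\cong\mathrm{id}_{\dbg(Y)}$ holds just as well (both $i^*i_*$ and $j^*j_*$ are isomorphic to the identity), so set $P^Y_{\cusp}:=u^*P_{\cusp}u_*$; the unit of the genuine adjunction $(u^*,u_*)$ gives a triangle $\mathscr{F}\to u_*u^*\mathscr{F}\to C_{\mathscr{F}}$ with $u^*C_{\mathscr{F}}=0$, since $u^*$ applied to the unit is inverse to the counit isomorphism. Your key support observation then applies verbatim: $u^*P_{\cusp}C_{\mathscr{F}}=0$ because $P_{\cusp}C_{\mathscr{F}}$ is a direct summand of $C_{\mathscr{F}}$, and taking $\mathscr{F}=S$ or $\mathscr{F}=P_{\cusp'}u_*\mathscr{G}$ yields $P^Y_{\cusp}(u^*S)\cong u^*S$ together with the relations $P^Y_{\cusp}P^Y_{\cusp'}\cong\delta_{\cusp\cusp'}P^Y_{\cusp}$. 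One caution on your last step: these object-level relations alone would not force orthogonality of the blocks; what makes the argument close is that $\mathrm{id}_{\dbg(Y)}\cong u^*\bigl(\bigoplus_{\cusp}P_{\cusp}\bigr)u_*$ is a decomposition of the identity \emph{functor}, so whiskering the orthogonal idempotent natural transformations $e_{\cusp}\colon\mathrm{id}_{\dbg(\cn)}\to\mathrm{id}_{\dbg(\cn)}$ by $u^*(-)u_*$ produces orthogonal idempotents of $\mathrm{id}_{\dbg(Y)}$, and naturality of these then annihilates morphisms between distinct blocks. With these corrections your proof is complete and entirely bypasses the orbit induction used in the paper, which is a worthwhile simplification.
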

\begin{proof} The proof is by induction on the number of $G$-orbits in $\bar{Y}/Y$. Theorem \ref{thm:orthog} implies the base case. The argument then follows without modification from the proof in \cite[Theorem 5.1]{A}.
\end{proof}

\begin{corollary} \label{cor: composition factors} Suppose $\cl$ is a local system on the orbit $\co$ of $\cn$ which appears as a composition factor of $\cH^i(\ic_\chi|_\co)$ for $\chi \in\irr W(L)$ for some Levi $L$.  Then $\ic(\co,\cl) \cong \ic_\psi$ for some $\psi\in \irr W(L)$ where $\ic_\chi$ and $\ic_\psi$ are in the same block. 
\end{corollary}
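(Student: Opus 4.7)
The plan is to trace the block structure across the restriction $u\colon\co\hookrightarrow\cn$, applying Corollary~\ref{cor:restr_orthog} twice. Fix the cuspidal datum $\cusp$ with $\ic_\chi\in\dbg(\cn,\mathbb{A}_\cusp)$ and set $d=\dim\co$. Corollary~\ref{cor:restr_orthog} gives $u^*\ic_\chi\in\dbg(\co,\mathbb{A}_\cusp)$, and intersecting the orthogonal decomposition of $\dbg(\co)$ with the (semisimple) perverse heart produces a block decomposition $\perv_G(\co)=\bigoplus_{\cusp''/\sim}\perv_G^{\cusp''}(\co)$ where $\perv_G^{\cusp''}(\co):=\perv_G(\co)\cap\dbg(\co,\mathbb{A}_{\cusp''})$.

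The key technical step is to show this block decomposition is $t$-exact: that $\phantom{}^p\cH^j(K)\in\perv_G^\cusp(\co)$ for every $K\in\dbg(\co,\mathbb{A}_\cusp)$. I would argue by induction on the perverse amplitude of $K$. The base case (a shifted perverse sheaf $K=P[-n]$) follows because the unique decomposition of $P$ in the semisimple heart matches, after shifting, the orthogonal decomposition of $K$ in $\dbg(\co)$, so the non-$\cusp$ summands vanish. For the inductive step, let $b$ be the top perverse degree of $K$ and decompose $\phantom{}^p\cH^b(K)=\bigoplus_{\cusp''/\sim}Q_{\cusp''}$ along the blocks. The canonical morphism $K\to\phantom{}^p\cH^b(K)[-b]$ has component $K\to Q_{\cusp''}[-b]$ equal to zero for $\cusp''\neq\cusp$ by orthogonality, so the canonical map factors through $Q_\cusp[-b]\hookrightarrow\phantom{}^p\cH^b(K)[-b]$; but $\phantom{}^p\cH^b$ of the canonical map is the identity on $\phantom{}^p\cH^b(K)$, which forces $Q_{\cusp''}=0$ for every $\cusp''\neq\cusp$. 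The triangle $\phantom{}^p\tau_{<b}K\to K\to\phantom{}^p\cH^b(K)[-b]$ then places $\phantom{}^p\tau_{<b}K$ in $\dbg(\co,\mathbb{A}_\cusp)$, and the induction continues.

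Applying this to $K=u^*\ic_\chi$, and using that $\phantom{}^p\cH^{i+d}(u^*\ic_\chi)=\cH^i(u^*\ic_\chi)[d]$ on the smooth orbit $\co$ together with the semisimplicity of local systems on a single orbit, the hypothesis that $\cl$ is a composition factor of $\cH^i(u^*\ic_\chi)$ becomes the statement that $\cl[d]$ is a simple summand of a perverse sheaf in $\perv_G^\cusp(\co)$; hence $\cl[d]\in\dbg(\co,\mathbb{A}_\cusp)$. By the generalized Springer correspondence, $\ic(\co,\cl)\cong\ic_\psi$ for some $\psi\in\irr W(L')$ and some Levi $L'$, lying in a unique block $\dbg(\cn,\mathbb{A}_{\cusp'})$. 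One more application of Corollary~\ref{cor:restr_orthog} yields $u^*\ic(\co,\cl)=\cl[d]\in\dbg(\co,\mathbb{A}_{\cusp'})$, so the nonzero object $\cl[d]$ lives simultaneously in two orthogonal summands of $\dbg(\co)$; therefore $\cusp\sim\cusp'$ and $\ic_\psi$ is in the same block as $\ic_\chi$. The main obstacle is the $t$-exactness claim in the middle paragraph: it does not follow formally from Theorem~\ref{thm:orthog} or Corollary~\ref{cor:restr_orthog} alone, and the inductive argument via the canonical truncation triangle is where the real work lies.
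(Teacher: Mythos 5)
Your argument is correct, and it is the natural way to deduce the corollary from Corollary \ref{cor:restr_orthog}, which is all the paper offers (the corollary is stated there without proof). Your middle paragraph usefully spells out the one point the paper leaves implicit --- that the orthogonal decomposition of $\dbg(\co)$ is automatically compatible with perverse truncation, so that each ${}^p\cH^j(u^*\ic_\chi)$, and hence each simple summand $\cl[\dim\co]$ of it, stays in the block of $\cusp$ --- and your inductive argument via the canonical map $K\to{}^p\cH^b(K)[-b]$ establishes this correctly.
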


\section{Applications: cleanness and Ext computations}\label{sec:clean}
\begin{definition}
For a nilpotent orbit $\co$ of $\cn$ consider the inclusion $j:\co\hookrightarrow \cn$.  A local system $\cl$ on $\co$ is called \textit{clean} if $j_!\cl[\dim\co] \cong j_*\cl[\dim\co] \cong \ic(\co,\cl)$.
\end{definition}

The following statement is well known in the setting of perverse sheaves on the nilpotent cone and character sheaves. In good characteristic, this follows from work of Lusztig \cite{L1, L3,L4, L5} and in particular, \cite[Theorem 23.1]{L5}.  His argument uses the fact that all cuspidal local systems in good characteristic must have distinct central characters, and he goes on to show that this implies an orthogonality relationship which gives the result. See \cite[Proposition 7.9]{L2} for character sheaves on a semisimple group, for instance.  The proof given here relies on these facts as well since they were needed to prove the orthogonal decomposition of the previous section.  However, this proof also applies whenever a simple perverse sheaf and its Verdier dual are orthogonal to all other simples assuming some finiteness conditions on the variety.

\begin{proposition}\label{cusp clean} Cuspidal local systems on the nilpotent cone are clean.
\end{proposition}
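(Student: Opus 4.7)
The plan is to exploit the orthogonal decomposition of Theorem \ref{thm:orthog} in the extreme case where the cuspidal datum has Levi equal to $G$ itself. For a cuspidal local system $\cl$ on $\co$, the tuple $\cusp = (G,\co,\cl)$ is a cuspidal datum whose relative Weyl group $W(G) = N_G(G)/G$ is trivial; under the equivalence $\perv_G(\cn) \cong \prod_{\cusp/\sim}\rep W(L)$, the block indexed by this $\cusp$ therefore contains exactly one simple perverse sheaf, namely $\ic_\cusp = \ic(\co,\cl)$. This extreme rigidity is what I want to exploit.

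Concretely, I would first show that $\ic_\cusp|_{\co'} = 0$ for every nilpotent orbit $\co' \neq \co$. Suppose for contradiction that $\cH^i(\ic_\cusp|_{\co'}) \neq 0$ for some $i$ and some $\co' \neq \co$. Because $\co'$ is $G$-homogeneous and the coefficients are $\ql$, this cohomology sheaf is a semisimple $G$-equivariant local system, so it admits a simple summand $\cl'$. Applying Corollary \ref{cor: composition factors} with $L = G$ and $\ic_\chi = \ic_\cusp$ then forces $\ic(\co',\cl') \cong \ic_\psi$ for some $\psi \in \irr W(G)$ with $\ic_\psi$ in the same block as $\ic_\cusp$; but that block contains only $\ic_\cusp$, so $\ic(\co',\cl') \cong \ic_\cusp$, contradicting $\co' \neq \co$. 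Hence $\ic_\cusp$ has no nonzero stalks off $\co$, and the standard open-closed triangle for the locally closed inclusion $j : \co \hookrightarrow \cn$ immediately forces $\ic_\cusp \cong j_!\cl[\dim\co]$.

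For the dual equality $\ic_\cusp \cong j_*\cl[\dim\co]$, I would invoke Remark \ref{rem:dualcusp}: since $\cl^\vee$ is also a cuspidal local system on $\co$, applying the preceding argument verbatim to $\cl^\vee$ yields $\ic(\co,\cl^\vee) \cong j_!\cl^\vee[\dim\co]$, and Verdier dualizing both sides (together with $\vd(\cl^\vee[\dim\co]) \cong \cl[\dim\co]$ on the smooth orbit $\co$) produces $\ic_\cusp \cong j_*\cl[\dim\co]$. Insofar as there is any real obstacle, it is already absorbed into Corollary \ref{cor: composition factors} and ultimately into Theorem \ref{thm:orthog}; once those are granted, the whole proof reduces to the observation that $W(G) = 1$, which makes the relevant block as small as possible.
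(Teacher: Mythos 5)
Your proof is correct, and it reaches the same destination by a slightly different road. The paper's argument works directly from Theorem \ref{thm:orthog}: assuming a nonvanishing stalk on a minimal bad orbit $\co'$, it uses adjunction and the perverse truncation triangle for $\iota_*S$ to force $\ic(\co,\cl)$ to appear as a summand of ${}^p\iota_*S$, contradicting the support condition. You instead outsource that orbit-by-orbit induction to Corollary \ref{cor: composition factors} and add the genuinely clarifying observation that for the datum $\cusp=(G,\co,\cl)$ the relative Weyl group is trivial, so the block is the singleton $\{\ic_\cusp\}$ and any composition factor of a boundary stalk would have to be $\ic_\cusp$ itself --- impossible for support reasons. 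The trade-off is that your version leans on Corollary \ref{cor:restr_orthog} (hence on the induction of \cite[Theorem 5.1]{A}, which the paper only cites), whereas the paper's proof of the proposition is self-contained given Theorem \ref{thm:orthog}; there is no circularity, since that corollary is established by recollement arguments independent of cleanness. Your handling of the second equality ($j_*\cl[\dim\co]\cong\ic(\co,\cl)$ via cuspidality of $\cl^\vee$ and Verdier duality) is exactly the paper's. Two small points worth tightening: you only need $\cl'$ to be a composition factor, not a direct summand, so the semisimplicity remark is unnecessary; and you should say explicitly that a nonzero restriction of $\ic_\cusp$ to $\overline{\co}\setminus\co$ must be detected on some single orbit $\co'$ because $\cn$ has finitely many orbits.
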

\begin{proof} Let $\cl$ be a cuspidal local system on the orbit $\co$, and here, let $j: \co\hookrightarrow\cn$. By the orthogonal decomposition, $\ic(\co,\cl)$ is orthogonal to all other simple perverse sheaves on $\cn$. Let $\iota: \co'\hookrightarrow\cn$ be the inclusion of an orbit $\co'\subset\overline{\co}$ with $\co\neq\co'$. 

Suppose that $\iota^*\ic(\co, \cl)\neq 0$. We assume without loss of generality that $\co'$ is minimal among orbits in $\overline{\co}$ with this property, i.e. we assume that no orbit in $\overline{\co'}/\co'$ has this property. Then, there exists $i\in\mathbb{Z}$ and a simple perverse sheaf $S$ (a shifted local system) on $\co'$ such that $\Hom^i(\iota^*\ic(\co, \cl), S)\neq 0$. By adjunction, we have $\Hom^i(\ic(\co, \cl), \iota_*S)\neq 0$. Consider the distinguished triangle \[{^p\iota_*}S\rightarrow \iota_*S\rightarrow A\rightarrow {^p\iota_*}S[1]\] where $A\in {^pD}^{>0}$. This gives an exact sequence \[\Hom^i(\ic(\co, \cl), {^p\iota_*}S)\rightarrow\Hom^i(\ic(\co, \cl), \iota_*S)\rightarrow\Hom^i(\ic(\co, \cl),A).\] Note that $\Hom^i(\ic(\co, \cl),A) = 0$ since we assumed $\co'$ minimal, and the support of $A$ is contained within $\overline{\co'}/\co'$. In particular, this implies that ${^p}\iota_*S$ has $\ic(\co, \cl)$ as a direct summand since $\ic(\co, \cl)$ is orthogonal to all other simple perverse sheaves on $\cn$. However, the support of ${^p\iota}_*S$ is contained within $\overline{\co'}$, which is a contradiction. Therefore, it must be that the stalk of $\ic(\co,\cl)$ at any $x\in\co'$ is $0$. Thus, $j_!\cl[d] = \ic(\co,\cl)$, where $d=\dim\co$.

Recall that $\cl^\vee$ is also cuspidal on $\co$ (Remark \ref{rem:dualcusp}). The above argument proves that $j_!(\cl^\vee) [d]\cong \ic(\co, \cl^\vee)$ as well. Applying Verdier dual yields $j_*\cl [d]\cong\ic(\co, \cl)$.\end{proof}
 
\begin{remark} This result is now known to hold for cuspidal character ($\ql$-) sheaves on $G$ defined over a field of any characteristic. The final cases were considered in \cite{O} and \cite{L7}.
\end{remark}
\subsection{Some Ext computations}

For the remainder of this section, let us assume we are in a particular block of the decomposition corresponding to the cuspidal datum $\cusp = (L,\co,\cl)$.  This block will have cuspidal simple perverse sheaf $\ic_\cusp$ on $\cn_L$.  The simple summand of $\mathbb{A}_\cusp$ corresponding to the representation $V^*_\psi$ of $W(L)$ will be denoted $\ic_\psi$.  

\begin{lemma}\label{prop:extiscoho} We have that \[\Hom^i_{\dbl(\cn_L)}(\ic_\cusp, \ic_\cusp) \cong \cH^i_L(\co) \cong \cH^i_G(G\times^P\co).\] In particular, $\Hom^i_{\dbl(\cn_L)}(\ic_\cusp, \ic_\cusp)$ is pure of weight $i$ where Frobenius acts by $q^{i/2}$ and vanishes for $i$ odd.
\end{lemma}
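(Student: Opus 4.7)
My plan is to use cleanness to reduce the computation to the orbit itself, convert the result to equivariant cohomology via adjunction and equivariant equivalence, and then read off purity from standard properties of classifying spaces.

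First, I would invoke Proposition \ref{cusp clean}: the cuspidal local system $\cl$ is clean, so $\ic_\cusp \cong j_!\cl[\dim\co] \cong j_*\cl[\dim\co]$ for the inclusion $j: \co \hookrightarrow \cn_L$. Since $j^!j_! \cong \mathrm{id}$ for any locally closed embedding (factoring $j$ as open-into-closure followed by closed-into-ambient), adjunction yields
\[\Hom^i_{\dbl(\cn_L)}(\ic_\cusp, \ic_\cusp) \cong \Hom^i_{\dbl(\co)}(\cl, \cl),\]
with the shifts by $\dim\co$ canceling on both sides. Next, I would fix $x\in\co$ with stabilizer $L_x$, identify $\co \cong L/L_x$, and invoke equivariant equivalence $\dbl(\co) \simeq D^b_{L_x}(\mathrm{pt})$ to rewrite this as $H^i_{L_x}(\mathrm{pt}, \End(V_\cl))$, where $V_\cl$ is the irreducible representation of $A_L(x) = L_x/L_x^\circ$ corresponding to $\cl$, viewed as an $L_x$-representation via the quotient map. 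By Schur's lemma, $\End(V_\cl)$ splits as an $A_L(x)$-representation into $\bar{\mathbb{Q}}_\ell \oplus M$ with $M^{A_L(x)} = 0$; the piece $M$ contributes trivially to equivariant cohomology, leaving precisely $H^i(BL_x) = \cH^i_L(\co)$.

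The second isomorphism $\cH^i_L(\co) \cong \cH^i_G(G\times^P\co)$ is immediate from equivariant induction equivalence, since $P = L\ltimes U_P$ acts on $\co$ through its Levi quotient, so $\cH^*_G(G\times^P\co) \cong \cH^*_P(\co) \cong \cH^*_L(\co)$. For the purity assertion, $H^i(BL_x)$ reduces, modulo the unipotent radical of $L_x^\circ$, to $H^i(BR)^{A_L(x)}$ where $R$ is the reductive quotient of $L_x^\circ$. The cohomology $H^*(BR)$ is a polynomial ring on even-degree generators (the $W_R$-invariant symmetric algebra on $H^2(BT)$ for a maximal torus $T\subset R$) with Frobenius acting on $H^{2k}$ by $q^k$, so vanishes in odd degrees and is pure of weight equal to the degree; taking invariants under the finite group $A_L(x)$ preserves these properties.

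The main obstacle is the vanishing of the contribution of $M$: while Schur immediately produces the trivial summand with the expected cohomology $H^i(BL_x)$, killing the $M$-contribution uses the cuspidality hypothesis in an essential way. The most direct justification proceeds via Lusztig's explicit classification of cuspidal data in good characteristic, where one verifies that $A_L(x)$ acts trivially on $H^*(BL_x^\circ)$ for the centralizers that actually arise; then $(H^*(BL_x^\circ)\otimes M)^{A_L(x)} = H^*(BL_x^\circ) \otimes M^{A_L(x)} = 0$ gives what is needed.
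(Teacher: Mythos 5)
Your argument is correct and follows the same broad strategy as the paper's proof (cleanness, then adjunction to reduce to $\Hom^\bullet(\cl,\cl)$ on the orbit, then equivariant cohomology of the stabilizer $S=L_x$), but you justify the one genuinely delicate step by a different route. You are right to isolate the vanishing of the $M$-contribution as the crux: the paper's own phrasing at this point (``for local systems, $R\dgHom(\cl,\cl)=\constant_\co$'') is literally false for rank $>1$, and what is actually needed is precisely your statement that the non-trivial $A_L(x)$-isotypic components of $\End(V_\cl)$ die in $L_x$-equivariant cohomology. Where you propose to verify case by case, via Lusztig's classification, that $A_L(x)=S/S^\circ$ acts trivially on $H^\bullet(BL_x^\circ)$, the paper gets this uniformly from a single structural fact, namely \cite[Proposition 2.8]{L1}: for a cuspidal pair the group $S^\circ/Z(L)^\circ$ is unipotent, hence $\cH^\bullet_{S^\circ}(x)\cong\cH^\bullet_{Z(L)^\circ}(x)$, and $S/S^\circ$ acts trivially on this because $Z(L)^\circ$ is central in $L$. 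This also identifies the answer concretely as $\cH^\bullet((\mathbb{P}^\infty)^r)$ with $r$ the rank of the torus $Z(L)^\circ$, from which odd-vanishing and purity (Frobenius acting by $q^{i/2}$ in degree $i$) are immediate; your more general appeal to $H^\bullet(BR)$ for the reductive quotient $R$ of $L_x^\circ$ is correct but not needed, since cuspidality forces $R=Z(L)^\circ$. In short: same approach, but you should replace the classification check by the structural input \cite[Proposition 2.8]{L1}, which avoids case analysis and makes transparent exactly where cuspidality enters.
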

\begin{proof} By Proposition \ref{cusp clean}, cuspidal local systems are clean, so we have that \[\Hom^i(\ic_\cusp, \ic_\cusp) =\Hom^i(j_!\cl[\dim\co], j_!\cl[\dim\co]).\] This reduces to a computation of local systems: $\Hom^i(j_!\cl, j_!\cl) = \Hom^i(\cl, j^!j_!\cl) = \Hom^i(\cl,\cl).$ On the other hand, $\Hom^i(\cl,\cl) = \cH^i(\co, R\dgHom(\cl, \cl)),$ and for rank one local systems, $R\dgHom(\cl, \cl) = \constant_{\co}$. Thus, $\Hom^i(\ic_\cusp, \ic_\cusp) \cong \cH^{i}_L(\co)$, as desired. 


We assume that all algebraic groups are split over $\mathbb{F}_q$. Let $x$ be a closed point in $\co$ fixed by Frobenius, so $\co \cong L\cdot x$. Let $S = \mathrm{Stab}_L(x)$ be the stabilizer of $x$ in $L$, $S^\circ =\mathrm{Stab}^\circ_L(x)$ its identity component, and $Z(L)^\circ$ be the identity component of the center of $L$. 

First, we prove that $\cH^\bullet_{S} (x) \cong \cH^\bullet_{Z(L)^\circ} (x)$. By \cite[Proposition 2.8]{L1}, the group $S^\circ/Z(L)^\circ$ is unipotent, which implies $\cH^\bullet_{S^\circ} (x) \cong \cH^\bullet_{Z(L)^\circ} (x).$ Thus, we have a homomorphism \[\cH^\bullet_{S^\circ} (x)^{S/S^\circ}\hookrightarrow\cH^\bullet_{S^\circ} (x) \cong \cH^\bullet_{Z(L)^\circ} (x).\] Moreover, $\pi^L_1(\co)\cong S/S^\circ$ acts trivially on $Z(L)^\circ$. Hence, the above injection is an isomorphism. Finally, we have an isomorphism $\cH^\bullet_{S}(x)\cong \cH^\bullet_{S^\circ} (x)^{S/S^\circ}$ (see, for instance, \cite[1.9(a)]{L6}).

By equivariant induction, we have $\cH _L ^{\bullet} (\co) \cong \cH ^{\bullet} _{S} ( x ),$ which is isomorphic to $\cH ^{\bullet} _{Z ( L )^{\circ}} ( x )$ by the above argument. Now, it is well known that \[\cH ^{\bullet}_{Z ( L )^{\circ}} ( x ) \cong \cH ^{\bullet} ( ( \mathbb{P} ^\infty )^{r} )\] where $r$ is the rank of the torus  $Z ( L )^{\circ}$ and that Frobenius acts by multiplication by $q$ in degree 2. 
\end{proof}

\begin{lemma} \label{lemma:restriso}Let $\ic_\psi$ be a simple summand of $\mathbb{A}_{\cusp}$ as described above. We have an isomorphism $\mathcal{R}_P^G(\ic_\psi)\cong \widetilde{\mathcal{R}}_P^G(\ic_\psi) \cong \ic_\cusp\otimes V_\psi^*$.
\end{lemma}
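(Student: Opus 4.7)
The plan is to combine Proposition \ref{prop:preser} with the isotypic decomposition of the Lusztig sheaf and the two adjunctions $(\mathcal{I}^G_P, \mathcal{R}^G_P)$ and $(\widetilde{\mathcal{R}}^G_P, \mathcal{I}^G_P)$ from \eqref{eq:functors}. Applying $\mathcal{R}^G_P$ to $\mathbb{A}_{\cusp} = \mathcal{I}^G_P(\ic_\cusp) \cong \bigoplus_{\chi \in \irr W(L)} \ic_\chi \otimes V_\chi$ yields
\[
\mathcal{R}^G_P\mathcal{I}^G_P(\ic_\cusp) \cong \bigoplus_{\chi} \mathcal{R}^G_P(\ic_\chi) \otimes V_\chi.
\]
By Proposition \ref{prop:preser} the left-hand side is a direct sum of copies of $\ic_\cusp$, and since $\perv_L(\cn_L)$ is semisimple, each summand $\mathcal{R}^G_P(\ic_\psi)$ must itself be isotypic of type $\ic_\cusp$. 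Write $\mathcal{R}^G_P(\ic_\psi) \cong \ic_\cusp \otimes W_\psi$ for some finite-dimensional $\ql$-vector space $W_\psi$.

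To pin down $W_\psi$, I would evaluate $\Hom(\mathbb{A}_{\cusp}, \ic_\psi)$ two ways. Using adjunction together with $\End(\ic_\cusp) \cong \ql$ (Schur's lemma for the simple perverse sheaf $\ic_\cusp$),
\[
\Hom(\mathcal{I}^G_P\ic_\cusp, \ic_\psi) \cong \Hom(\ic_\cusp, \mathcal{R}^G_P\ic_\psi) \cong \Hom(\ic_\cusp, \ic_\cusp \otimes W_\psi) \cong W_\psi.
\]
On the other hand, computing directly from the isotypic decomposition of $\mathbb{A}_{\cusp}$ and Schur's lemma applied to the pairwise non-isomorphic simples $\ic_\chi$,
\[
\Hom\Bigl(\bigoplus_{\chi} \ic_\chi \otimes V_\chi,\ \ic_\psi\Bigr) \cong \bigoplus_{\chi} V_\chi^* \otimes \Hom(\ic_\chi, \ic_\psi) \cong V_\psi^*.
\]
Comparing gives $W_\psi \cong V_\psi^*$, so $\mathcal{R}^G_P(\ic_\psi) \cong \ic_\cusp \otimes V_\psi^*$.

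The case of $\widetilde{\mathcal{R}}^G_P$ is entirely parallel: Proposition \ref{prop:preser} equally applies to $\widetilde{\mathcal{R}}^G_P \mathcal{I}^G_P \ic_\cusp$, so $\widetilde{\mathcal{R}}^G_P(\ic_\psi) \cong \ic_\cusp \otimes W'_\psi$ for some vector space $W'_\psi$. Now I use the other adjunction $(\widetilde{\mathcal{R}}^G_P, \mathcal{I}^G_P)$:
\[
(W'_\psi)^* \cong \Hom(\ic_\cusp \otimes W'_\psi, \ic_\cusp) \cong \Hom(\widetilde{\mathcal{R}}^G_P\ic_\psi, \ic_\cusp) \cong \Hom(\ic_\psi, \mathcal{I}^G_P \ic_\cusp) \cong V_\psi,
\]
whence $W'_\psi \cong V_\psi^*$, giving the claimed isomorphism.

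There isn't really a substantive obstacle here — the argument is a formal consequence of Proposition \ref{prop:preser}, the two adjunctions, and Schur's lemma. The only point requiring a moment of care is bookkeeping on tensor factors and dual vector spaces, which is what forces both restriction functors to produce $V_\psi^*$ (rather than $V_\psi$) as the multiplicity space, reflecting the appearance of the contragredient representation in Lusztig's generalized Springer correspondence.
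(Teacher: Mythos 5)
Your argument is correct and follows essentially the same route as the paper: Proposition \ref{prop:preser} plus exactness/semisimplicity to see that $\mathcal{R}^G_P(\ic_\psi)$ is $\ic_\cusp$-isotypic, then the adjunction $\Hom(\ic_\cusp,\mathcal{R}^G_P\ic_\psi)\cong\Hom(\mathbb{A}_\cusp,\ic_\psi)\cong V_\psi^*$ to identify the multiplicity space, and the other adjunction for $\widetilde{\mathcal{R}}^G_P$. The only cosmetic difference is that your bookkeeping lands on $V_\psi^*$ directly in the $\widetilde{\mathcal{R}}^G_P$ case, whereas the paper writes the multiplicity space as $V_\psi$ and then invokes the self-duality $V_\psi\cong V_\psi^*$ of Weyl-group representations.
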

\begin{proof}By Proposition \ref{prop:preser}, we know $\mathcal{R}_P^G(\ic_\psi)$ is contained in the block $\dbl(\cn_L, \ic_\cusp)$.  Since our restriction functor $\mathcal{R}_P^G$ is $t$-exact, we know $\mathcal{R}_P^G(\ic_\psi)$ must be perverse. As $\ic_\cusp$ is the only simple perverse sheaf in $\dbl(\cn_L, \ic_\cusp)$, we have $\mathcal{R}^G_P(\ic_\psi) \cong \ic_\cusp\otimes\Hom(\ic_\cusp, \mathcal{R}^G_P(\ic_\psi)).$ 
Furthermore, we have $$\Hom(\ic_\cusp, \mathcal{R}^G_P(\ic_\psi))\cong \Hom(\mathbb{A}_{\cusp}, \ic_\psi) \\ \cong  \Hom(\ic_\psi\otimes V_\psi, \ic_\psi)\cong V_\psi^*.$$

Now, we need only to show that $\mathcal{R}$ and $\widetilde{\mathcal{R}}$ give isomorphic objects when restricted to the category of perverse sheaves.  Following the same reasoning as above, we have $\widetilde{\mathcal{R}}^G_P(\ic_\psi) \cong \ic_\cusp\otimes V_\psi.$ Since $W(L)$ is a Weyl group, which can be deduced from \cite[Theorem 5.9]{L0}, we have $V_\psi \cong V_\psi^*$, and the result follows.
\end{proof}

\begin{proposition} We have an isomorphism \[\Hom^i_{\dbg(\cn)}(\mathbb{A}_{\cusp}, \mathbb{A}_{\cusp})\cong \cH^i_L(\co)\otimes \ql[W(L)].\]
\end{proposition}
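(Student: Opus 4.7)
The plan is to compute $\Hom^i(\mathbb{A}_\cusp,\mathbb{A}_\cusp)$ by pushing one factor of $\mathcal{I}^G_P$ through the adjunction $(\mathcal{I}^G_P,\mathcal{R}^G_P)$, reducing the problem to a $\Hom$ computation in $\dbl(\cn_L)$, and then feeding in Lemma \ref{lemma:restriso} (to evaluate the restriction explicitly) together with Lemma \ref{prop:extiscoho} (to identify the answer with equivariant cohomology of $\co$).

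First I would write
\[
\Hom^i(\mathbb{A}_\cusp,\mathbb{A}_\cusp) = \Hom^i(\mathcal{I}^G_P\ic_\cusp,\mathcal{I}^G_P\ic_\cusp) \cong \Hom^i(\ic_\cusp,\mathcal{R}^G_P\mathcal{I}^G_P\ic_\cusp)
\]
using the adjoint pair $(\mathcal{I}^G_P,\mathcal{R}^G_P)$.  Next I would compute $\mathcal{R}^G_P\mathbb{A}_\cusp$ as follows: starting from the isotypic decomposition $\mathbb{A}_\cusp \cong \bigoplus_{\psi \in \irr W(L)} \ic_\psi\otimes V_\psi$, apply $\mathcal{R}^G_P$ termwise and invoke Lemma \ref{lemma:restriso} to replace $\mathcal{R}^G_P(\ic_\psi)$ by $\ic_\cusp\otimes V_\psi^*$.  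This yields
\[
\mathcal{R}^G_P\mathcal{I}^G_P\ic_\cusp \;\cong\; \ic_\cusp \otimes \bigoplus_{\psi\in\irr W(L)} V_\psi^*\otimes V_\psi \;\cong\; \ic_\cusp \otimes \ql[W(L)],
\]
where in the last step I would use the classical fact that, as a vector space, $\bigoplus_\psi V_\psi^*\otimes V_\psi$ is the underlying space of the regular representation $\ql[W(L)]$.

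Substituting this back and pulling the $W(L)$-vector space out of the $\Hom$ gives
\[
\Hom^i(\mathbb{A}_\cusp,\mathbb{A}_\cusp) \cong \Hom^i(\ic_\cusp,\ic_\cusp)\otimes \ql[W(L)],
\]
and then Lemma \ref{prop:extiscoho} identifies the first factor with $\cH^i_L(\co)$, which finishes the proof.

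There is no really hard step here; everything is essentially a bookkeeping exercise once the earlier lemmas are in place.  The one point that deserves care is that the isomorphism in Lemma \ref{lemma:restriso} is genuinely an isomorphism of perverse sheaves (not just of objects in the derived category), so the rearrangement of $\mathcal{R}^G_P\mathbb{A}_\cusp$ is legitimate without any spectral sequence considerations; and since the computation only asserts an isomorphism of graded $\ql$-vector spaces, no further $W(L)$-equivariance needs to be tracked.
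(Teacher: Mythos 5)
Your proposal is correct and follows essentially the same route as the paper: adjunction for $(\mathcal{I}^G_P,\mathcal{R}^G_P)$, the identification $\mathcal{R}^G_P\mathbb{A}_{\cusp}\cong \ic_\cusp\otimes\ql[W(L)]$, and then Lemma \ref{prop:extiscoho}. The only cosmetic difference is that you assemble the multiplicity space $\bigoplus_\psi V_\psi^*\otimes V_\psi$ from Lemma \ref{lemma:restriso} termwise, whereas the paper gets it in one step from Lusztig's isomorphism $\End(\mathbb{A}_{\cusp})\cong\ql[W(L)]$; these are the same computation.
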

\begin{proof}First, using a similar argument as in the proof of Lemma \ref{lemma:restriso} and Lusztig's isomorphism \eqref{eq:endgenspr}, we have that $\mathcal{R}^G_P \mathbb{A}_{\cusp} \cong \ic_\cusp\otimes\Hom(\ic_\cusp, \mathcal{R}^G_P \mathbb{A}_{\cusp})\cong \ic_\cusp\otimes \ql[W(L)]$.

\begin{align*}\Hom^i_{\dbg(\cn)}(\mathbb{A}_{\cusp}, \mathbb{A}_{\cusp}) &\cong \Hom^i_{\dbl(\cn_L)}(\ic_\cusp,\mathcal{R}^G_P \mathbb{A}_{\cusp}) \\
&\cong \Hom^i_{\dbl(\cn_L)}(\ic_\cusp, \ic_\cusp\otimes\ql[W(L)])\\
&\cong \Hom^i_{\dbl(\cn_L)}(\ic_\cusp, \ic_\cusp)\otimes\ql[W(L)].\end{align*} Finally, we apply Lemma \ref{prop:extiscoho}, and the result follows. \end{proof}

\begin{proposition}\label{prop:winvar} For simple perverse sheaves $\ic_\psi$ and $\ic_\chi$ in the same block corresponding to $\psi, \chi\in\irr W(L)$, we have an isomorphism \[\Hom^i_{\dbl(\cn_L)}(\mathcal{R}^G_P(\ic_\psi), \mathcal{R}^G_P(\ic_\chi))\cong V_\psi\otimes \cH^i_L(\co)\otimes V_\chi^*.\]
Moreover, we have that
\begin{align*}\Hom^i_{\dbg(\cn)}(\ic_\psi, \ic_\chi) & \cong \Hom^i_{\dbl(\cn_L)}(\mathcal{R}^G_P(\ic_\psi), \mathcal{R}^G_P(\ic_\chi))^{W(L)}\\ & \cong  (V_\psi\otimes \cH^i_L(\co)\otimes V_\chi^*)^{W(L)}.\end{align*}
\end{proposition}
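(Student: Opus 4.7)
The plan is to deduce both isomorphisms of the proposition from the preceding lemmas, the adjunction $(\mathcal{I}^G_P, \mathcal{R}^G_P)$, and the isotypic decomposition $\mathbb{A}_\cusp \cong \bigoplus_{\alpha \in \irr W(L)} \ic_\alpha \otimes V_\alpha$ supplied by the generalized Springer correspondence.

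For the first isomorphism, I would apply Lemma \ref{lemma:restriso} to rewrite both arguments as $\mathcal{R}^G_P(\ic_\psi) \cong \ic_\cusp \otimes V_\psi^*$ and $\mathcal{R}^G_P(\ic_\chi) \cong \ic_\cusp \otimes V_\chi^*$, pull the finite-dimensional multiplicity spaces out of the Hom to get $V_\psi \otimes \Hom^i(\ic_\cusp, \ic_\cusp) \otimes V_\chi^*$, and invoke Lemma \ref{prop:extiscoho} to identify $\Hom^i(\ic_\cusp, \ic_\cusp) \cong \cH^i_L(\co)$.

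For the second isomorphism, my strategy is to compute $\Hom^i_{\dbg(\cn)}(\mathbb{A}_\cusp, \ic_\chi)$ in two ways. On the one hand, the adjunction together with Lemmas \ref{lemma:restriso} and \ref{prop:extiscoho} yield
\[\Hom^i(\mathbb{A}_\cusp, \ic_\chi) \cong \Hom^i(\ic_\cusp, \mathcal{R}^G_P \ic_\chi) \cong \Hom^i(\ic_\cusp, \ic_\cusp \otimes V_\chi^*) \cong \cH^i_L(\co) \otimes V_\chi^*.\]
On the other hand, decomposing the source via the generalized Springer correspondence gives
\[\Hom^i(\mathbb{A}_\cusp, \ic_\chi) \cong \bigoplus_\alpha V_\alpha^* \otimes \Hom^i(\ic_\alpha, \ic_\chi).\]
Applying the functor $(V_\psi \otimes -)^{W(L)}$, equivalently $\Hom_{W(L)}(V_\psi^*, -)$, to both expressions and using Schur's lemma to isolate the $\alpha = \psi$ summand on the second yields
\[\Hom^i(\ic_\psi, \ic_\chi) \cong (V_\psi \otimes \cH^i_L(\co) \otimes V_\chi^*)^{W(L)},\]
which together with the first part gives the full statement.

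The main subtlety I anticipate is ensuring that these manipulations are compatible with the $W(L)$-action induced by $\End(\mathbb{A}_\cusp) \cong \ql[W(L)]$: both decompositions of $\Hom^i(\mathbb{A}_\cusp, \ic_\chi)$ must be identified as $W(L)$-modules, with the action on the adjunction side transported through the isomorphism $\mathcal{R}^G_P \mathcal{I}^G_P \ic_\cusp \cong \ic_\cusp \otimes \ql[W(L)]$ established in the proof of the previous proposition. Aside from this equivariance bookkeeping, the argument is a routine combination of the lemmas already established in this section.
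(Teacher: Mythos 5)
Your treatment of the first isomorphism coincides with the paper's: combine Lemma \ref{lemma:restriso} with Lemma \ref{prop:extiscoho} and pull the finite-dimensional multiplicity spaces out of the $\Hom$. For the second isomorphism the paper defers to the argument of \cite[Theorem 4.6]{A}, which runs differently from yours: there the map $\Hom^i(\ic_\psi,\ic_\chi)\to\Hom^i(\mathcal{R}^G_P\ic_\psi,\mathcal{R}^G_P\ic_\chi)$ is the one induced by the functor $\mathcal{R}^G_P$ itself, and one shows it is injective with image exactly the $W(L)$-invariants (ultimately by comparing graded dimensions against the known size of $\End^\bullet(\mathbb{A}_\cusp)$). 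Your route --- computing $\Hom^i(\mathbb{A}_\cusp,\ic_\chi)$ once by adjunction and once through the isotypic decomposition, then applying $\Hom_{W(L)}(V_\psi^*,-)$ and Schur's lemma --- is a legitimate alternative skeleton, and the isotypic/Schur half is watertight: it correctly yields $\Hom^i(\ic_\psi,\ic_\chi)\cong (V_\psi\otimes\Hom^i(\mathbb{A}_\cusp,\ic_\chi))^{W(L)}$, where $W(L)$ acts by precomposition through $\End(\mathbb{A}_\cusp)\cong\ql[W(L)]$.

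The gap is exactly the point you set aside as ``equivariance bookkeeping.'' To conclude, you must identify the precomposition action on $\Hom^i(\mathbb{A}_\cusp,\ic_\chi)\cong\cH^i_L(\co)\otimes V_\chi^*$ with the diagonal action in which $W(L)=N_G(L)/L$ acts on $\cH^i_L(\co)$ by its natural geometric action and on $V_\chi^*$ by the given representation. Formally transporting the action through the adjunction and through $\mathcal{R}^G_P\mathcal{I}^G_P\ic_\cusp\cong\ic_\cusp\otimes\ql[W(L)]$ only shows that \emph{some} $W(L)$-module structure sits on $\cH^i_L(\co)\otimes V_\chi^*$; it does not compute it. In degree $0$ the identification is Lusztig's isomorphism \eqref{eq:endgenspr}, but in positive degrees the action on the factor $\cH^\bullet_L(\co)\cong\cH^\bullet_{Z(L)^\circ}(\pt)$ is genuine geometric content --- it is precisely what the description of $\End^\bullet(\mathbb{A}_\cusp)$ as a smash product of $\ql[W(L)]$ with $\cH^\bullet_L(\co)$ (or, equivalently, Achar's injectivity-plus-dimension-count argument) supplies. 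Without that input your argument establishes the isomorphism only as graded vector spaces with an unidentified action on the middle tensor factor, which falls short of the assertion as it is meant to be read.
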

\begin{proof}
The first statement follows quickly from combining Lemmas \ref{prop:extiscoho} and \ref{lemma:restriso}.  For the second statement, the proof follows in the same manner as in \cite[Theorem 4.6]{A}.
\end{proof}

\begin{corollary}\label{cor:pure} Let $P_1, P_2\in\perv_G(\cn)$. Then $\Hom^i_{\dbg(\cn)}(P_1, P_2)$ is pure of weight $i$ for all even $i\in\mathbb{Z}$ and vanishes for all odd $i$.
\end{corollary}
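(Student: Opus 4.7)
The plan is to reduce the statement to the case of simple perverse sheaves in a single block and then invoke Proposition \ref{prop:winvar} together with the purity part of Lemma \ref{prop:extiscoho}.

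First, I would use the fact that $\perv_G(\cn)$ is a semisimple abelian category, so that any $P_1, P_2 \in \perv_G(\cn)$ decompose as finite direct sums of simple objects. Since $\Hom^i$ is bilinear in direct sums, it suffices to verify the claim when $P_1$ and $P_2$ are simple. By Theorem \ref{thm:orthog}, if $P_1$ and $P_2$ lie in distinct blocks, then $\Hom^i(P_1, P_2) = 0$ for every $i$, which is trivially pure of every weight, so I may further assume $P_1 = \ic_\psi$ and $P_2 = \ic_\chi$ both belong to the block associated to a fixed cuspidal datum $\cusp = (L, \co, \cl)$.

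Next, I would apply Proposition \ref{prop:winvar} to obtain the identification
\[
\Hom^i_{\dbg(\cn)}(\ic_\psi, \ic_\chi) \cong \bigl(V_\psi \otimes \cH^i_L(\co) \otimes V_\chi^*\bigr)^{W(L)}.
\]
By Lemma \ref{prop:extiscoho}, $\cH^i_L(\co)$ vanishes for odd $i$ and is pure of weight $i$ with Frobenius acting by $q^{i/2}$ for even $i$. The finite-dimensional representations $V_\psi$ and $V_\chi^*$ of the (finite) relative Weyl group $W(L)$ carry the trivial Frobenius action, so they are pure of weight $0$. Consequently the tensor product $V_\psi \otimes \cH^i_L(\co) \otimes V_\chi^*$ is pure of weight $i$ for even $i$ and vanishes for odd $i$.

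Finally, taking $W(L)$-invariants is exact and preserves the Frobenius-eigenvalue decomposition (as it only picks out a subspace), so the invariants remain pure of weight $i$ for even $i$ and vanish for odd $i$. This yields the desired conclusion. The only potentially delicate point is confirming that the isomorphism of Proposition \ref{prop:winvar} is Frobenius-equivariant with the expected weights on each factor; this is essentially built into the proofs of Lemma \ref{prop:extiscoho} (where the weight of $\cH^i_L(\co)$ is computed explicitly via the cohomology of $(\mathbb{P}^\infty)^r$) and Lemma \ref{lemma:restriso}, so no new work is needed beyond unwinding these identifications.
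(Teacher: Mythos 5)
Your proof is correct and follows essentially the same route as the paper: reduce to simple objects by semisimplicity of $\perv_G(\cn)$, dispose of the cross-block case with Theorem \ref{thm:orthog}, and conclude in the single-block case from Proposition \ref{prop:winvar} together with the purity and odd-degree vanishing of $\cH^i_L(\co)$ from Lemma \ref{prop:extiscoho}. Your extra remark on Frobenius-equivariance of the identifications is a reasonable point of care that the paper leaves implicit.
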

\begin{proof}First, suppose that $P_1$ and $P_2$ are simple. If they are not in the same block, then $\Hom^i(P_1, P_2)$ vanishes by the orthogonal decomposition, Theorem \ref{thm:orthog}. If they are in the same block, then the result follows from Proposition \ref{prop:winvar} and the fact that $\cH^i_L(\co)$ is pure of weight $i$ for $i$ even and vanishes for $i$ odd. For general ($G$-equivariant) perverse sheaves on $\cn$, we use the facts that $\perv_G(\cn)$ is a semisimple category and that $\Hom^i(-, -)$ commutes with direct sum.
\end{proof}

\section{Generalized Green functions}\label{sec:green}
In this section, we talk about generalized Green functions. For this, we need to work in the mixed category. For $X$ an algebraic variety defined over $\mathbb{F}_q$, we consider the category of mixed $\ell$-adic complexes $\dm(X)$. There is a natural functor forgetting the Weil structure $\xi: \dm(X)\rightarrow\dbc(X\times_{\textup{Spec}(\mathbb{F}_q)}\textup{Spec}(\bar{\mathbb{F}}_q))$. The standard reference for the definition and properties of $\dm(X)$ is \cite[Section 5]{BBD}. 

We define $K(X)$ as the quotient of the Grothendieck group $K(\dm(X))/\sim$ where we identify isomorphism classes of simple perverse sheaves $[S_1]\sim[S_2]$ if $S_1$ and $S_2$ have the same weight and $\xi(S_1)\cong\xi(S_2)$. We fix a square root of the Tate sheaf. Then $K(X)$ has the structure of a $\mathbb{Z}[t^{1/2}, t^{-1/2}]$-module so that the action of $t$ corresponds to Tate twist: $[\mathcal{F}(i/2)] = t^{-i/2}[\mathcal{F}]$. For the rest of this section, we assume that $G$ is a split connected, reductive algebraic group defined over $\mathbb{F}_q$ and let $\cn$ denote its nilpotent cone. We also assume that $\mathbb{F}_q$ is sufficiently large so that all nilpotent orbits are non-empty. If $\co$ is a nilpotent orbit, then $K(\co)$ is the free $\mathbb{Z}[t^{1/2}, t^{-1/2}]$-module generated by classes of (weight 0) irreducible local systems on $\co$.

In what follows, we define four sets of polynomials: $p_{S, S'}, \tilde{p}_{S, S'}, \lambda_{S, S'},$ and $\omega_{S, S'}$.  For simplicity of notation, we assume throughout the section that $S$ and $S'$ are in the same block. For $\chi, \psi\in\irr W(L),$ let $\ic_\chi$ and $\ic_\psi$ be the simple perverse sheaves that are pure of weight 0 corresponding to $V_\chi^*$ and $V_\psi^*$. In this case, we will instead use the notation: $p_{\chi, \psi}(t), \lambda_{\chi, \psi}(t), \tilde{p}_{\chi, \psi}(t)$, and $\omega_{\chi, \psi}(t)$. Also, we denote by $\cl_\psi$ the local system such that $\ic_\psi|_{\co_\psi} = \cl_\psi[\dim\co_\psi](\frac{1}{2}\dim\co_\psi)$, where $\co_\psi$ is the orbit open in the support of $\ic_\psi$.

We define $p_{\chi, \psi}(t)\in\mathbb{Z}[t^{1/2}, t^{-1/2}]$ as
\begin{equation}\label{eq:restrpoly}[\ic_\chi|_{\co}] =\sum_{\{\psi | \co_\psi = \co\}}p_{\chi, \psi}(t)[\cl_\psi].\end{equation}
We also define `dual' polynomials. Let $j_\co:\co\rightarrow\cn$ be the inclusion of an orbit. Let $\tilde{p}_{\chi, \psi}(t)\in\mathbb{Z}[t^{1/2}, t^{-1/2}]$ be given by 
\begin{equation}\label{eq:shriekpoly}[j_\co^!\ic_\chi] =\sum_{\{\psi | \co_\psi = \co\}}\tilde{p}_{\chi, \psi}(t)[\cl_\psi].\end{equation} 

\begin{lemma} \label{lemma: easyprelation}The polynomials $p$ and $\tilde{p}$ satisfy the following relation: $\tilde{p}_{\chi, \psi}(t^{-1}) =  t^{\dim\co}p_{\chi^*, \psi^*}(t).$
\end{lemma}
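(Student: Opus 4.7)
The plan is to derive the identity by applying Verdier duality to the defining equation \eqref{eq:shriekpoly} and comparing with \eqref{eq:restrpoly} applied to $\ic_{\chi^*}$.

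First, I will record how Verdier duality $\vd$ acts on the Grothendieck group $K(\co)$. Since $\vd$ sends a pure sheaf of weight $w$ to one of weight $-w$ and $t$ corresponds to Tate twist, $\vd$ is $\mathbb{Z}$-linear and satisfies the rule $\vd(f(t)\cdot x) = f(t^{-1})\cdot \vd(x)$ for $f \in \mathbb{Z}[t^{1/2}, t^{-1/2}]$. Moreover, since $\co$ is smooth of dimension $\dim \co$, for any irreducible local system $\cl$ on $\co$ we have $\vd \cl = \cl^\vee[2\dim\co](\dim\co)$, and hence $[\vd \cl] = t^{-\dim\co}[\cl^\vee]$ in $K(\co)$.

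Next, I will use the identification $\vd \ic_\chi = \ic_{\chi^*}$, where $\chi^*$ is (by convention) the irreducible representation such that $\ic_{\chi^*}$ corresponds to the local system dual to that of $\ic_\chi$; in particular $\ic_{\chi^*}$ remains pure of weight $0$ because $\ic_\chi$ is, so it is a legitimate input to \eqref{eq:restrpoly}. The analogous identity $\cl_\psi^\vee = \cl_{\psi^*}$ holds on each orbit. Using $\vd j_\co^! = j_\co^* \vd$ and applying $\vd$ to both sides of \eqref{eq:shriekpoly} gives
\[
[\ic_{\chi^*}|_\co] = [j_\co^* \vd \ic_\chi] = \sum_{\{\psi \mid \co_\psi = \co\}} \tilde{p}_{\chi,\psi}(t^{-1})\, t^{-\dim\co}\, [\cl_{\psi^*}].
\]
Reindexing the sum via the involution $\psi \leftrightarrow \psi^*$, which is a bijection on $\{\psi \mid \co_\psi = \co\}$ since $\psi \mapsto \psi^*$ preserves the underlying orbit, and comparing with \eqref{eq:restrpoly} for $\ic_{\chi^*}$ yields $p_{\chi^*, \psi^*}(t) = t^{-\dim\co}\, \tilde{p}_{\chi, \psi}(t^{-1})$, which is precisely the stated identity.

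The main obstacle is essentially bookkeeping: I need to ensure that the notations $\chi^*$ and $\cl^\vee = \cl_{\psi^*}$ for the dual representation and dual local system are coherent with the convention $\ic_\psi \mapsto V_\psi^*$ from the generalized Springer correspondence, and that Verdier duality really does act on $K(\co)$ by conjugating $t \mapsto t^{-1}$ together with dualizing local systems. Once these conventions are fixed, the result is a direct application of Verdier duality in the mixed Grothendieck group.
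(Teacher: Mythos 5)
Your proof is correct and is essentially the same as the paper's: both arguments apply Verdier duality, using $\vd j_\co^! = j_\co^* \vd$, $\vd\ic_\chi = \ic_{\chi^*}$, the rule $t^i[\cl]\mapsto t^{-\dim\co-i}[\cl^\vee]$ on $K(\co)$, and linear independence of the classes $[\cl_\psi]$. The only (cosmetic) difference is that you dualize the defining equation \eqref{eq:shriekpoly} for $\tilde p$, whereas the paper dualizes \eqref{eq:restrpoly} for $\ic_{\chi^*}$; the two computations are mirror images of each other.
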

\begin{proof} Let $\ic_\chi = \ic(\co_\chi, \cl_\chi)$. First, we have that $j^!_\co(\ic_\chi)\cong \vd j^*_{\co}\vd\ic_{\chi} = \vd (\ic(\co_\chi, \cl^\vee_\chi)|_{\co})$, where $\cl^\vee$ denotes the dual local system to $\cl$.  Furthermore, Verdier dual transforms local systems in the following way: \begin{equation}\label{}\vd (\cl(-i)) = \cl^\vee [2\dim\co](\dim\co +i).\end{equation} Hence, Verdier dual induces a morphism $\vd:K(\co)\rightarrow K(\co)$ given by $t^i[\cl] \mapsto t^{-\dim\co - i}[\cl^\vee]$. We apply $\vd$ to equation \eqref{eq:restrpoly} to get 
\begin{align*}[j^!_\co \ic_\chi] &= [\vd j^*_\co \ic(\co_\chi, \cl_\chi^*)]\\
                                                   &=\vd \sum_{\{\psi|\co_\psi = \co\}}p_{\chi^*, \psi}(t)[\cl_\psi]\\
                                                   &=\sum_{\{\psi|\co_\psi = \co\}}t^{-\dim\co}p_{\chi^*, \psi}(t^{-1})[\cl^*_\psi]\\
                                                   &=\sum_{\{\psi^*|\co_{\psi^*} = \co\}}t^{-\dim\co}p_{\chi^*, \psi^*}(t^{-1})[\cl_\psi].    \end{align*} Since the irreducible local systems (of weight 0) $[\cl_{\psi}]$ are linearly independent in $K(\co)$, we have that \[\tilde{p}_{\chi, \psi}(t) = t^{-\dim\co}p_{\chi^*, \psi^*}(t^{-1}).\] The result follows.\end{proof}
Since we know $\ic_\chi|_{\co_{\chi}} = \cl_\chi[\dim \co_\chi](\frac{1}{2}\dim \co_\chi)$ and $\ic_\chi$ vanishes off $\overline{\co_\chi}$, we have
\begin{equation} \label{eq:vanish}
 p_{\chi, \psi}(t) = \begin{cases}
 t^{-\dim \co_\chi/2} & \text{if $\chi=\psi$} \\
 0 & \text{if $\co_\psi \not\subset \overline{\co_\chi}$ or if $\co_\psi=\co_\chi\ \mathrm{with\ }\chi \neq \psi$}
\end{cases}
\end{equation}
We also define $\lambda_{\chi, \psi}(t)\in\mathbb{Z}[t^{1/2}, t^{-1/2}]$ by 
\begin{equation}\label{eq:lambdapoly}\begin{alignedat}{2}[R\Gamma_c(\co_\chi, \cl_\chi\otimes \cl^\vee_\psi)] &=\lambda_{\chi, \psi}(t)[\constant_{\pt}] &\ \ \mathrm{\ if}\  \co_\chi = \co_\psi\\
\lambda_{\chi, \psi}(t) & = 0 &\ \ \mathrm{\ if}\  \co_\chi \neq \co_\psi
\end{alignedat}\end{equation}
and $\omega_{\chi, \psi}(t)\in\mathbb{Z}[t^{1/2}, t^{-1/2}]$ by
\begin{equation}\label{eq:omegapoly}[\vd R\Hom(\ic_\chi, \ic_\psi)] =\omega_{\chi, \psi}(t)[\constant_{\pt}].\end{equation} 
Using Corollary \ref{cor:pure}, we can reformulate the definition of $\omega$ in the following way:
\begin{equation}\label{eq:omegapolyrewrite}
\begin{alignedat}{1} 
\omega_{\chi, \psi}(t) & = \sum_{i\in\mathbb{Z}}(-1)^i\dim H^{i}(\vd R\Hom(\ic_\chi, \ic_\psi)) t^{i/2}\\
&= \sum_{i\in\mathbb{Z}}(-1)^i\dim\Hom^{-i}(\ic_\chi, \ic_\psi) t^{i/2}.\\
\end{alignedat}\end{equation} 

\begin{remark} It is easy to see that our definitions of the polynomials $p, \tilde{p}, \lambda,$ and  $\omega$ can be extended to the case of simple perverse sheaves $S$ and $S'$ in different blocks. However, in this case, it is easy to see that $p_{S, S'} = \tilde{p}_{S, S'} =\omega_{S, S'} =0$ by Theorem \ref{thm:orthog} and Corollary \ref{cor: composition factors}. Moreover, the main result of this section Theorem \ref{thm:LSalgo} becomes trivial in this case. \end{remark}

\begin{lemma} \label{lem:omegasym} For any simple perverse sheaves $\ic_\chi$ and $\ic_\psi$ in $\perv_G(\cn)$, we have that $\omega_{\chi,\psi}(t) = \omega_{\psi, \chi}(t)$.  Furthermore, if $\ic_\chi$ and $\ic_\psi$ are in different blocks, then $\omega_{\chi,\psi}(t) = \lambda_{\chi,\psi}(t) = 0.$\end{lemma}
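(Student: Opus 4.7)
My plan has two components: the symmetry $\omega_{\chi,\psi}(t) = \omega_{\psi,\chi}(t)$, and the simultaneous vanishing of $\omega_{\chi,\psi}(t)$ and $\lambda_{\chi,\psi}(t)$ across distinct blocks. For the $\omega$ statements, the reformulation \eqref{eq:omegapolyrewrite} reduces everything to computations of $\dim\Hom^i(\ic_\chi,\ic_\psi)$. When $\ic_\chi$ and $\ic_\psi$ lie in different blocks, Theorem~\ref{thm:orthog} gives $\Hom^i = 0$ in both directions, yielding at once the symmetry and the vanishing $\omega_{\chi,\psi}(t)=0$. When they lie in a common block with cuspidal datum $(L,\co,\cl)$, Proposition~\ref{prop:winvar} identifies $\Hom^i(\ic_\chi,\ic_\psi)\cong(V_\chi\otimes\cH^i_L(\co)\otimes V_\psi^*)^{W(L)}$, and the symmetry then follows from the self-duality $V\cong V^*$ of irreducible $W(L)$-representations, exactly as used in Lemma~\ref{lemma:restriso}.

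The substantive remaining assertion is $\lambda_{\chi,\psi}(t)=0$ when $\ic_\chi$ and $\ic_\psi$ are in distinct blocks. The case $\co_\chi\neq\co_\psi$ is immediate from \eqref{eq:lambdapoly}, so assume $\co_\chi=\co_\psi=\co$ and write $j\colon\co\hookrightarrow\cn$. The key intermediate claim I plan to establish is that the complex $j_!\cl_\chi[\dim\co]$ lies entirely in the block $\dbg(\cn,\mathbb{A}_\cusp)$ associated to $\chi$. To do this, I will use the attaching triangle $j_!j^*\ic_\chi \to \ic_\chi \to i_*i^*\ic_\chi \to j_!j^*\ic_\chi[1]$, where $i$ is the inclusion of $\overline{\co}\setminus\co$. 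Since $j^*\ic_\chi\cong\cl_\chi[\dim\co](\tfrac{1}{2}\dim\co)$ and $\ic_\chi$ is tautologically in its own block, it suffices to show that every simple perverse sheaf $\ic(\co',\cl')$ appearing as a constituent of $i_*i^*\ic_\chi$ lies in the block of $\chi$; by construction such $\cl'$ is a composition factor of some cohomology sheaf of $\ic_\chi|_{\co'}$, so Corollary~\ref{cor: composition factors} applies. The same argument places $j_!\cl_\psi[\dim\co]$ in the block of $\psi$.

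With these block memberships in hand, Theorem~\ref{thm:orthog} gives $R\Hom(j_!\cl_\chi[\dim\co],j_!\cl_\psi[\dim\co])=0$. Using the adjunction $j_!\dashv j^!$ together with the identity $j^!j_!=\mathrm{id}$ (valid for any locally closed inclusion), this Hom computes as $R\Gamma(\co,\cl_\chi^\vee\otimes\cl_\psi)$, which must therefore vanish. Applying Verdier duality on the smooth variety $\co$ in the form $\vd R\Gamma_c(\co,\mathcal{F})\cong R\Gamma(\co,\vd\mathcal{F})$ to $\mathcal{F}=\cl_\chi\otimes\cl_\psi^\vee$, whose Verdier dual is $\cl_\chi^\vee\otimes\cl_\psi[2\dim\co](\dim\co)$, converts this vanishing into $R\Gamma_c(\co,\cl_\chi\otimes\cl_\psi^\vee)=0$, whence $\lambda_{\chi,\psi}(t)=0$ by \eqref{eq:lambdapoly}. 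The one nontrivial step is the block membership of $j_!\cl_\chi[\dim\co]$, which rests on Corollary~\ref{cor: composition factors}; everything else is a formal consequence of the orthogonal decomposition and Verdier duality.
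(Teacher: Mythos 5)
Your proof is correct and takes essentially the same route as the paper: the two $\omega$ statements are handled exactly as in the paper's proof (orthogonality from Theorem~\ref{thm:orthog} across blocks, and Proposition~\ref{prop:winvar} together with $V_\phi\cong V_\phi^*$ for $W(L)$-representations within a block), while your argument for $\lambda_{\chi,\psi}(t)=0$ is a legitimate and useful unpacking of the paper's one-line appeal to Corollary~\ref{cor: composition factors}, resting on the same two ingredients (that corollary plus the orthogonal decomposition). The only step worth tightening is the passage from ``$\ic(\co',\cl')$ is a constituent of some ${}^p\!H^k(i_*i^*\ic_\chi)$'' to ``$\cl'$ is a composition factor of some $\cH^j(\ic_\chi|_{\co'})$'': a priori the hypercohomology spectral sequence could cancel such contributions, so one should first split $i_*i^*\ic_\chi$ into its block components via Theorem~\ref{thm:orthog} (the stalk cohomology then splits as a direct sum over blocks, with no cancellation between them) before invoking Corollary~\ref{cor: composition factors}.
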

\begin{proof}If $\ic_\chi$ and $\ic_\psi$ are in different blocks, then $\omega_{\chi,\psi}(t) = \omega_{\psi,\chi}(t)=0$ follows from Theorem \ref{thm:orthog} and Corollary \ref{cor: composition factors} implies $\lambda_{\chi,\psi}(t) = \lambda_{\psi,\chi}(t)=0$. 

Now assume that $\ic_\chi$ and $\ic_\psi$ are in the same block. Proposition \ref{prop:winvar} implies \begin{align*}\Hom^{i}(\ic_\chi, \ic_\psi) &=(V_\chi\otimes\cH_L^{i}(\co)\otimes V^*_\psi )^{W(L)} \hspace{.5cm}\textup{  and }\\\Hom^{i}(\ic_{\psi}, \ic_{\chi}) &=(V^*_{\chi}\otimes \cH_L^{i}(\co)\otimes V_{\psi})^{W(L)}.\end{align*} Since $W(L)$ is a Weyl group, we have $V\cong V^*$ for any $W(L)$-representation $V$. In particular, $\dim(V_\chi\otimes\cH_L^{i}(\co)\otimes V^*_\psi )^{W(L)}=\dim (V^*_{\chi}\otimes \cH_L^{i}(\co)\otimes V_{\psi})^{W(L)}.$ Hence, using equation \eqref{eq:omegapolyrewrite} we obtain $\omega_{\chi, \psi}(t) = \omega_{\psi, \chi}(t)$. \end{proof}

The following is a refinement of \cite[Lemma 6.6]{A} to include perverse sheaves that are not self dual.
\begin{lemma}\label{lem:summationeqn} \[[\vd R\Hom(j^*_\co\ic_\chi, j^!_\co \ic_\psi)] = \sum_{\{\phi, \phi'|\co_\phi = \co_{\phi'} = \co\}}p_{\chi, \phi}(t)\lambda_{\phi, \phi'}(t)p_{\psi^*, \phi'^*}(t)[\constant_{\pt}]\]
\end{lemma}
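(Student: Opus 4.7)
The plan is to compute the class $[\vd R\Hom(j_\co^*\ic_\chi, j_\co^!\ic_\psi)]$ in $K(\pt)$ by expanding the two restrictions in $K(\co)$ using \eqref{eq:restrpoly} and \eqref{eq:shriekpoly}, and then tracking the standard functorial identities through the $\mathbb{Z}[t^{\pm 1/2}]$-module structure. The starting point is the isomorphism
\[
\vd R\Hom(j_\co^*\ic_\chi, j_\co^!\ic_\psi) \cong R\Gamma_c\bigl(\co, \vd R\dgHom(j_\co^*\ic_\chi, j_\co^!\ic_\psi)\bigr),
\]
which follows from $\vd\circ R\Gamma = R\Gamma_c\circ\vd$ applied to $\co \to \pt$, together with $R\dgHom(\cl,\cl') \cong \cl^\vee\otimes\cl'$ for local systems on the smooth variety $\co$.

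First I would record the K-theoretic ``bilinearity'' of $R\dgHom$: since the functor is contravariant in its first argument and covariant in its second, shifts and Tate twists in the first variable invert, so that
\[
[R\dgHom(A,B)] = \sum_{\phi,\phi'} p_\phi(t^{-1})\, q_{\phi'}(t)\, [\cl_\phi^\vee \otimes \cl_{\phi'}]
\]
whenever $[A]=\sum p_\phi(t)[\cl_\phi]$ and $[B]=\sum q_{\phi'}(t)[\cl_{\phi'}]$ in $K(\co)$. Applying this to $A = j_\co^*\ic_\chi$ and $B = j_\co^!\ic_\psi$ with coefficients $p_{\chi,\phi}$ and $\tilde p_{\psi,\phi'}$, and then applying $\vd$ (which inverts $t$ in each coefficient and sends $[\cl_\phi^\vee\otimes\cl_{\phi'}]$ to $t^{-d}[\cl_\phi\otimes\cl_{\phi'}^\vee]$, using $\vd\cF \cong \cF^\vee[2d](d)$ for a local system $\cF$ on $\co$ with $d = \dim\co$), yields
\[
[\vd R\dgHom(j_\co^*\ic_\chi, j_\co^!\ic_\psi)] = \sum_{\phi,\phi'} p_{\chi,\phi}(t)\, \tilde p_{\psi,\phi'}(t^{-1})\, t^{-d}\, [\cl_\phi\otimes\cl_{\phi'}^\vee].
\]

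Next, pushing forward by $R\Gamma_c$ and invoking \eqref{eq:lambdapoly} replaces each remaining basis element by $\lambda_{\phi,\phi'}(t)[\constant_\pt]$, with the terms satisfying $\co_\phi\ne\co_{\phi'}$ dropping out automatically since $\lambda$ vanishes there. The result is
\[
\sum_{\phi,\phi'}\, p_{\chi,\phi}(t)\, t^{-d}\, \tilde p_{\psi,\phi'}(t^{-1})\, \lambda_{\phi,\phi'}(t)\,[\constant_\pt],
\]
and Lemma~\ref{lemma: easyprelation} gives $\tilde p_{\psi,\phi'}(t^{-1}) = t^d\, p_{\psi^*,\phi'^*}(t)$, so the $t^d$ cancels the $t^{-d}$ to produce the claimed identity.

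The main obstacle is bookkeeping: one must carefully track how Verdier duality, internal Hom, and the $\mathbb{Z}[t^{\pm1/2}]$-grading from Tate twists and cohomological shifts interact, and in particular ensure that the $t\leftrightarrow t^{-1}$ substitution lands on the second argument so that Lemma~\ref{lemma: easyprelation} applies to $\tilde p_{\psi,\phi'}$ and produces the starred indices on $\psi$ and $\phi'$ in the final formula, rather than on $\chi$ and $\phi$. Since nilpotent orbits are even-dimensional, the parity signs from shifts by $[2d]$ are trivial, simplifying the tracking; beyond that, the argument is purely formal.
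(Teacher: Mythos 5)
Your proposal is correct and follows essentially the same route as the paper: expand both restrictions in $K(\co)$ via \eqref{eq:restrpoly} and \eqref{eq:shriekpoly}, use the (anti)linearity of $\vd R\Hom$ in the two variables, convert $\tilde p_{\psi,\phi'}(t^{-1})$ to $t^{\dim\co}p_{\psi^*,\phi'^*}(t)$ via Lemma~\ref{lemma: easyprelation}, and identify $[\vd R\Hom(\cl_\phi,\cl_{\phi'})]$ with $t^{-\dim\co}\lambda_{\phi,\phi'}(t)[\constant_{\pt}]$ so the powers of $t$ cancel. The only cosmetic difference is that you push $\vd$ inside to $R\dgHom$ on $\co$ before applying $R\Gamma_c$, whereas the paper dualizes $a_*(\cl_\phi^\vee\otimes\cl_{\phi'})$ at the last step; the content is identical.
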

\begin{proof} First, we note that $\vd(R\Hom(\mathscr{F}(n), \mathscr{G}(m))) = (\vd R\Hom(\mathscr{F}, \mathscr{G}))(n-m)$. Thus, $\vd R\Hom(-, -) : K(\co)\times K(\co) \rightarrow K(\co)$ is $\mathbb{Z}[t^{1/2}, t^{-1/2}]$ linear in the first variable and antilinear in the second variable with respect to the involution $t^{1/2}\mapsto t^{-1/2}$. Hence, \[[\vd R\Hom(j^*_\co\ic_\chi, j^!_\co \ic_\psi)] = \sum_{\{\phi, \phi'|\co_\phi = \co_{\phi'} = \co\}}p_{\chi, \phi}(t)\tilde{p}_{\psi, \phi'}(t^{-1})\vd [R\Hom(\cl_\phi, \cl_{\phi'})].\] Now, we apply Lemma \ref{lemma: easyprelation} to get \[[\vd R\Hom(j^*_\co\ic_\chi, j^!_\co \ic_\psi)] = t^{\dim\co}\hspace{-.5cm}\sum_{\{\phi, \phi'|\co_\phi = \co_{\phi'} = \co\}}\hspace{-.5cm}p_{\chi, \phi}(t)p_{\psi^*, \phi'^*}(t)[\vd R\Hom(\cl_\phi, \cl_{\phi'})].\] To finish the proof, it suffices to show $[\vd R\Hom(\cl_\phi, \cl_{\phi'})] = t^{-\dim\co}\lambda_{\phi, \phi'}(t)[\constant_{\pt}]$. Let $a: \co\rightarrow \pt$. Then $R\Hom(\cl_\phi, \cl_{\phi'}) \cong R\Gamma(\co, \cl^\vee_{\phi}\otimes \cl_{\phi'}) \cong a_* (\cl^\vee_{\phi}\otimes \cl_{\phi'})$. Now, we apply Verdier duality to get $a_! (\cl_{\phi}\otimes \cl^\vee_{\phi'})[2\dim\co](\dim\co).$ Hence, we have that $[\vd R\Hom(\cl_\phi, \cl_{\phi'})] = t^{-\dim\co}[R\Gamma_c(\co, \cl_{\phi}\otimes \cl^\vee_{\phi'})].$\end{proof}

We need more notation for the proof of the following result.  In particular, $\co_\phi \leq \co_{\phi'}$ means $\co_\phi\subset\overline{\co_{\phi'}}$ and $\co_\phi < \co_{\phi'}$ means $\co_\phi\subset\overline{\co_{\phi'}}\setminus \co_{\phi'}$.  We also use the fact that $\co_\phi = \co_{\phi^*}$.

\begin{remark}The following theorem is proven in \cite[Theorem 24.8]{L5}.  We include its proof here for completeness.  The inductive steps of the proof illustrate a method for computing the unknown polynomials $p$ and $\lambda$ from $\omega$ which is known as the Lusztig--Shoji algorithm.\end{remark}
\begin{theorem}\label{thm:LSalgo} \begin{enumerate} \item For all $\chi, \psi$, $p_{\chi, \psi}(t) =  p_{\chi^*, \psi^*}(t)$ and  $\lambda_{\chi, \psi}(t)=\lambda_{\psi, \chi}(t)$. \item The polynomials $p$ and $\lambda$ are the unique ones satisfying $$\omega_{\chi, \psi}(t) = \sum_{ \phi, \phi'} p_{\chi, \phi}(t)\lambda_{\phi, \phi'}(t)p_{\psi, \phi'}(t) $$ and conditions \eqref{eq:vanish} and \eqref{eq:lambdapoly}. \end{enumerate}
\end{theorem}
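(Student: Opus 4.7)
The plan is to prove (2) first and then extract (1), in three stages: establish the bilinear expansion of $\omega$ via devissage, deduce the two symmetries, and verify uniqueness by triangular recursion.

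For the expansion, I would apply an iterated recollement argument along the nilpotent orbit stratification of $\cn$. This produces, in the Grothendieck group, the identity $[\mathcal{F}] = \sum_\co [j_{\co *} j_\co^! \mathcal{F}]$ for any $\mathcal{F} \in \dm(\cn)$. Specializing to $\mathcal{F} = \dgHom(\ic_\chi, \ic_\psi)$ and taking global sections (using $R\Gamma \circ j_{\co *} = R\Gamma_\co$) yields
\[
[\Hom(\ic_\chi, \ic_\psi)] = \sum_\co [\Hom(j_\co^* \ic_\chi, j_\co^! \ic_\psi)].
\]
Applying Verdier duality rewrites the left-hand side as $\omega_{\chi,\psi}(t)[\constant_{\pt}]$ by definition of $\omega$, and Lemma \ref{lem:summationeqn} expands each summand on the right. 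Summing over $\co$ and extending the inner sum to all pairs $(\phi,\phi')$ (allowed by \eqref{eq:lambdapoly}) gives the formula of (2), except with $p_{\psi^*, \phi'^*}$ in place of $p_{\psi, \phi'}$. Since $W(L)$ is a Weyl group (cf.\ the proof of Lemma \ref{lemma:restriso}), every irreducible is self-dual, so $\psi^* = \psi$ and $\phi'^* = \phi'$ as elements of $\irr W(L)$; this both collapses the asymmetric formula to the symmetric one and verifies the first equality $p_{\chi,\psi}(t)=p_{\chi^*,\psi^*}(t)$ of (1) tautologically.

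For the $\lambda$-symmetry in (1), unwind the definitions: from Remark \ref{rem:dualcusp}, $\vd \ic_\chi = \ic(\co_\chi, \cl_\chi^\vee) = \ic_{\chi^*}$, so $\cl_{\chi^*} = \cl_\chi^\vee$. Therefore $\cl_{\psi^*} \otimes \cl_{\chi^*}^\vee \cong \cl_\chi \otimes \cl_\psi^\vee$ on the common orbit, giving $\lambda_{\chi,\psi}(t) = \lambda_{\psi^*,\chi^*}(t)$. Combining this with $\chi^* = \chi$ and $\psi^* = \psi$ yields $\lambda_{\chi,\psi}(t) = \lambda_{\psi,\chi}(t)$.

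For uniqueness, I would linearly order the indices so $\dim \co_\chi$ is non-decreasing. By \eqref{eq:vanish}, the matrix $P = (p_{\chi, \phi})$ is block lower-triangular in this order, with each diagonal block equal to $t^{-\dim \co/2}\cdot I$; by \eqref{eq:lambdapoly}, the matrix $\Lambda = (\lambda_{\phi, \phi'})$ is block diagonal. The identity from (2) becomes the matrix equation $\Omega = P\Lambda P^T$, which can be solved orbit by orbit starting from the smallest: the diagonal block $\Omega_{aa}$ gives $\Lambda_a = t^{\dim \co_a}\bigl(\Omega_{aa} - \sum_{c<a} P_{ac}\Lambda_c P_{ac}^T\bigr)$ in terms of previously determined data, and each off-diagonal block $\Omega_{ab}$ with $a>b$ then pins down $P_{ab}$ via $t^{-\dim\co_b/2}\,P_{ab}\Lambda_b = \Omega_{ab} - \sum_{c<b} P_{ac}\Lambda_c P_{bc}^T$. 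The main obstacle will be this off-diagonal step when $\Lambda_b$ is not invertible as a polynomial matrix; resolving it is the technical heart of the Lusztig--Shoji algorithm and will require exploiting the symmetry $\omega_{\chi,\psi}=\omega_{\psi,\chi}$ from Lemma \ref{lem:omegasym} together with the rigidity built into \eqref{eq:vanish} (so that $P_{ab}$ is overdetermined when one varies $a$).
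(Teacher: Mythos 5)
There is a genuine gap at the central step of your argument: the claim that ``$\psi^* = \psi$ and $\phi'^* = \phi'$ as elements of $\irr W(L)$'' because $W(L)$ is a Weyl group. In the notation of this section, the involution $\psi \mapsto \psi^*$ is \emph{not} duality of $W(L)$-representations; it is the involution induced by Verdier duality on simple perverse sheaves, i.e.\ $\ic_{\psi^*} = \vd\ic_\psi = \ic(\co_\psi, \cl_\psi^\vee)$, so $\psi^* = \psi$ would mean that the local system $\cl_\psi$ on the nilpotent orbit is self-dual. This fails in general: already the cuspidal local systems need not be self-dual (for $SL_n$ they correspond to primitive characters of $\mu_n$, and $\cl^\vee \not\cong \cl$ for $n>2$; indeed $\vd\ic_\cusp$ may lie in a different block, cf.\ Remark \ref{rem:dualcusp} and the role of central characters). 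The self-duality $V_\psi \cong V_\psi^*$ of $W(L)$-representations is a different statement and does not transfer to the local systems. Consequently part (1) is not tautological --- it is precisely the content of the theorem --- and since your symmetric form of the expansion in (2) and your proof of $\lambda_{\chi,\psi}=\lambda_{\psi,\chi}$ both rest on $\psi^*=\psi$, the argument collapses at this point. (Your identity $\lambda_{\chi,\psi} = \lambda_{\psi^*,\chi^*}$, which follows from commutativity of the tensor product, is correct but does not yield $\lambda_{\chi,\psi}=\lambda_{\psi,\chi}$ without the missing input.)

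The paper closes exactly this gap by a simultaneous induction on $d=\dim\co_\psi$ applied to the \emph{asymmetric} identity $\omega_{\chi,\psi}=\sum p_{\chi,\phi}\lambda_{\phi,\phi'}p_{\psi^*,\phi'^*}$: the symmetry $\omega_{\chi,\psi}=\omega_{\psi,\chi}$ (Lemma \ref{lem:omegasym}, where the self-duality of $W(L)$-representations \emph{is} legitimately used via Proposition \ref{prop:winvar}) first forces $\lambda_{\chi,\psi}=\lambda_{\psi,\chi}$ on the top orbit, and then comparing $\omega_{\alpha,\chi}$ with $\omega_{\chi,\alpha}$ and invoking the nonsingularity of the matrix $(\lambda_{\phi,\phi'})_{\co_\phi=\co_{\phi'}}$ forces $p_{\chi^*,\phi^*}=p_{\chi,\phi}$. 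Your devissage derivation of the expansion and your triangular-recursion argument for uniqueness are essentially the paper's (and the invertibility worry in the off-diagonal step is resolved by the same nonsingularity of $(\lambda_{\phi,\phi'})$, which need only hold over the fraction field of $\mathbb{Z}[t^{1/2},t^{-1/2}]$), but the symmetry statements in (1) require the inductive argument and cannot be obtained by the representation-theoretic shortcut.
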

\begin{proof}
From Lemma \ref{lem:summationeqn} above and \cite[Lemma 6.4]{A}, we have 
\begin{align*}
[\vd R\Hom(\ic_\chi, \ic_\psi)]&= \sum_{\co\subset N} [\vd R\Hom(j^*_\co\ic_\chi, j^!_\co \ic_\psi)] \\
&=\sum_{\co\subset N} \sum_{\substack{\{\phi, \phi'|\\\co_\phi = \co_{\phi'} = \co\}}} p_{\chi, \phi}(t)\lambda_{\phi, \phi'}(t)p_{\psi^*, \phi'^*}(t)[\constant_{\pt}].
\end{align*}
This can be improved because $\lambda_{\phi, \phi'}$ is zero when $\co_\phi \neq \co_{\phi'}$.
Thus, we see that 
\begin{equation}\label{eq:useful}
\omega_{\chi, \psi}(t) = \sum_{ \phi, \phi'} p_{\chi, \phi}(t)\lambda_{\phi, \phi'}(t)p_{\psi^*, \phi'^*}(t).
\end{equation}
We will now use the above equation to prove (1), and thus, prove (2).

We prove both statements of (1) simultaneously by induction on $d=\dim \co_\psi$.  If $d<0$, we know the statement is trivially true.  Let us suppose then that $p_{\chi, \psi}(t) =  p_{\chi^*, \psi^*}(t)$ and  $\lambda_{\chi, \psi}(t)=\lambda_{\psi, \chi}(t)$ for $d\leq k-1$.  First, we will use \eqref{eq:useful} to show $\lambda_{\chi, \psi}(t)=\lambda_{\psi, \chi}(t)$ for $d = k$.   Since we know $\lambda_{\chi, \psi}(t)$ vanishes otherwise, we can assume that $\co_\chi = \co_{\psi}$.  
Recall that $p_{\phi,\phi}=t^{-\dim \co_\phi/2}$ for any $\phi$.  Now, with these conditions we have 
\begin{align*}
\omega_{\chi,\psi}(t) &= p_{\chi,\chi}(t)p_{\psi^*,\psi^*}(t)\lambda_{\chi, \psi}(t) + \sum_{ \substack{\co_\phi <\co_\chi \\ \co_{\phi'} < \co_{\psi}}} p_{\chi, \phi}(t)\lambda_{\phi, \phi'}(t)p_{\psi^*, \phi'^*}(t)\\
&=t^{-\dim \co_\chi}\lambda_{\chi, \psi}(t) +\sum_{ \substack{\co_\phi <\co_\chi \\ \co_{\phi'} < \co_{\psi}}} p_{\chi, \phi}(t)\lambda_{\phi, \phi'}(t)p_{\psi, {\phi'}}(t).
\end{align*}
By the inductive hypothesis, $p_{\psi^*, \phi'^*}(t)=p_{\psi, \phi'}(t)$ since $d=\dim \co_{\phi'} \leq k-1$.  After rearranging, we get 
 \begin{equation}\label{eq:induct}
 t^{-\dim \co_\chi}\lambda_{\chi, \psi}(t) = \omega_{\chi,\psi}(t) -\sum_{ \substack{\co_\phi <\co_\chi \\ \co_{\phi'} < \co_{\psi}}} p_{\chi, \phi}(t)\lambda_{\phi, \phi'}(t)p_{\psi, \phi'}(t),
\end{equation}
Since the right hand side of this equation is symmetric with respect to $\chi$ and $\psi$ by the inductive hypothesis and Lemma \ref{lem:omegasym}, we know the left hand side must be as well.  Thus, $\lambda_{\chi, \psi}(t)=\lambda_{\psi, \chi}(t)$ for $d = k$.

Now, we need to show $p_{\chi, \psi}(t) =  p_{\chi^*, \psi^*}(t)$ when $d=k$.  From \eqref{eq:vanish}, we may assume $\co_\psi \leq \co_\chi$.   Suppose $\alpha$ is such that $\co_\alpha = \co_\psi$, so that $\dim \co_\alpha= k$.  Again, we use \eqref{eq:useful}.  
\begin{align*}
\omega_{\alpha,\chi}(t) &= \sum_{\co_{\phi}=\co_{\alpha}} p_{\alpha,\alpha}(t)p_{\chi^*,\phi^*}(t)\lambda_{\alpha, \phi}(t) + \sum_{ \substack{\co_\phi <\co_\alpha\\ \co_{\phi} = \co_{\phi'}}} p_{\alpha, \phi}(t)\lambda_{\phi, \phi'}(t)p_{\chi^*, \phi'^*}(t)\\
&=t^{-(\dim \co_\alpha)/2}\sum_{\co_{\phi}=\co_{\alpha}}p_{\chi^*,\phi^*}(t)\lambda_{\alpha, \phi}(t) + \sum_{ \substack{\co_\phi <\co_\alpha\\ \co_{\phi} = \co_{\phi'}}} p_{\alpha, \phi}(t)\lambda_{\phi, \phi'}(t)p_{\chi, \phi'}(t).
\end{align*} 
Here, the induction hypothesis gives $p_{\chi^*, \phi'^*}(t)=p_{\chi, \phi'}(t)$ in the second sum since $d=\dim \co_{\phi'}<\dim \co_\alpha = k$. Similarly, we also have
\begin{align*}
\omega_{\chi, \alpha}(t) &= \sum_{\co_{\phi}=\co_{\alpha}} p_{\chi,\phi}(t)p_{\alpha^*,\alpha^*}(t)\lambda_{\phi, \alpha}(t) + \sum_{ \substack{\co_\phi <\co_\alpha\\ \co_{\phi} = \co_{\phi'}}}  p_{\chi, \phi}(t)\lambda_{\phi, \phi'}(t)p_{\alpha^*, \phi'^*}(t)\\
&=t^{-(\dim \co_\alpha)/2}\sum_{\co_{\phi}=\co_{\alpha}}p_{\chi,\phi}(t)\lambda_{\phi, \alpha}(t) + \sum_{ \substack{\co_\phi <\co_\alpha\\ \co_{\phi} = \co_{\phi'}}}  p_{\chi, \phi}(t)\lambda_{\phi, \phi'}(t)p_{\alpha, \phi'}(t).
\end{align*}
Keeping in mind the induction hypothesis and Lemma \ref{lem:omegasym}, we subtract these two formulas to get  \begin{equation}\label{eq:subtract} \sum_{\co_\phi = \co_\alpha}\left(p_{\chi^*,\phi^*}(t) - p_{\chi,\phi}(t)\right)\lambda_{\alpha,\phi}(t) = 0.\end{equation}
The classes of irreducible local systems on $\co$ form a basis in $K(\co)$, and therefore, the matrix $(\lambda_{\phi,\phi'})_{\co_\phi = \co_{\phi'}}$ must be nonsingular.  Thus, \eqref{eq:subtract} implies $p_{\chi^*,\phi^*}(t) = p_{\chi,\phi}(t)$.  This completes the induction argument and the proof of (1).
From \eqref{eq:induct} we get a recursive formula which implies the uniqueness of part (2) which completes the proof.
\end{proof}

\appendix
\section{Central characters}\label{centcharapp}

Throughout this section we will assume that we have a connected algebraic group $G$ acting on a variety $X$ with finitely many orbits so that the center $Z$ acts trivially. The $G$-action induces an action of the quotient $G/Z$ on $X$, and the action map $G\times X\rightarrow X$ factors as $$G\times X\stackrel{(q,id_X)}{\longrightarrow}G/Z\times X\stackrel{\alpha}{\rightarrow}X$$ where $q:G\rightarrow G/Z$ is the quotient.
\begin{proposition} The category of $G$-equivariant local systems on $G/Z$ (for the left translation action of $G$) is equivalent to the category of $Z/Z^\circ$-representations as a tensor category.
\end{proposition}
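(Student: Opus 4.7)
The plan is to produce the claimed equivalence via the fiber-at-identity functor $\mathcal{F}\mapsto \mathcal{F}_{eZ}$. Since $G$ acts transitively on $G/Z$ with stabilizer $Z$ at the base point $eZ$, the induction equivalence \cite{BL} identifies $G$-equivariant sheaves on $G/Z$ with (algebraic) $Z$-representations on a point: the forward functor takes $\mathcal{F}$ to $V:=\mathcal{F}_{eZ}$ with its natural $Z$-action coming from the equivariance datum, and the quasi-inverse sends a $Z$-representation $V$ to the associated bundle $G\times^Z V$. This is a general, purely formal input, and is compatible with tensor products because stalks commute with tensor products and the induced $Z$-action on $(\mathcal{F}\otimes\mathcal{G})_{eZ}$ is the diagonal one.

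The real content is to cut the equivalence down so that local systems on one side correspond to $Z/Z^\circ$-representations on the other. I would show that $G\times^Z V$ is a local system if and only if $Z^\circ$ acts trivially on $V$. The easy direction: if $Z^\circ$ acts trivially, then $V$ is a representation of the finite group $Z/Z^\circ$, and we can rewrite $G\times^Z V \cong (G/Z^\circ)\times^{Z/Z^\circ} V$, which is manifestly a local system since $G/Z^\circ\to G/Z$ is a finite étale cover. For the converse, I would pull back a $G$-equivariant local system $\mathcal{F}$ along the quotient $q:G\to G/Z$. Since $G$ is connected and $q^*\mathcal{F}$ is $G$-equivariant for left translation, it is canonically isomorphic to the constant local system $\underline{V}_G$ with $V=\mathcal{F}_{eZ}$. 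Recovering $\mathcal{F}$ from $q^*\mathcal{F}$ by descent requires a $Z$-equivariant structure on $\underline{V}_G$ (for the right translation action), and this structure is given precisely by the $Z$-representation on $V$. For $\mathcal{F}$ to be a local system, this descent must be through a map into the automorphism group of a local system, which is ``discrete'' in the relevant sense, so its restriction to the connected group $Z^\circ$ must be trivial.

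I would close by packaging the two steps into mutually inverse tensor functors: fiber-at-$eZ$ sends a $G$-equivariant local system to a $Z/Z^\circ$-representation, and associated-bundle sends a $Z/Z^\circ$-representation $V$ (viewed as a $Z$-representation trivial on $Z^\circ$) to $G\times^Z V$. Their compositions are naturally isomorphic to the identity by the general induction equivalence, and tensor compatibility has already been noted.

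The main obstacle is the triviality of the $Z^\circ$-action on the fiber of a $G$-equivariant local system. Algebraically, it comes down to saying that a homomorphism from the connected algebraic group $Z^\circ$ into the automorphism group of a lisse $\ql$-sheaf (a profinite/discrete-like target once one fixes a compatible system of lattices) must be trivial; one can also argue by noting that $G\times^Z V\to G/Z$ is a vector bundle whose transition maps take values in the image of $Z\to \mathrm{GL}(V)$, and this bundle is locally constant only when the image is finite, i.e.\ only when $Z^\circ$ lies in the kernel. Everything else in the proof is formal bookkeeping with the induction equivalence and the fiber functor.
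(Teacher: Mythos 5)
Your argument is correct and is essentially the paper's: the paper also invokes the induction equivalence to pass from $G$-equivariant sheaves on $G/Z$ to $Z$-equivariant sheaves on a point, and then appeals to the fact that the $Z$-equivariant fundamental group of a point is $Z/Z^\circ$ — which is exactly the content of your step that the connected group $Z^\circ$ must act trivially on the fiber of a local system. You simply unwind that citation explicitly (and check tensor compatibility, which the paper leaves implicit), so there is nothing to object to.
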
 
\begin{proof} By induction equivalence, the category of $G$-equivariant sheaves on $G/Z$ is equivalent to the category of $Z$-equivariant sheaves on a point. Furthermore, the category of $Z$-equivariant sheaves on a point is equivalent to the category of $Z/Z^\circ$ representations. (The $Z$-equivariant fundamental group of a point is $Z/Z^0$.) \end{proof} For a character $\chi$ of $Z/Z^\circ$, we denote the corresponding local system under this equivalence by $\cl_{\chi}$.  
\begin{definition}We say that a $G$-equivariant perverse (or constructible) sheaf $\mathcal{F}$ admits a central character $\chi$ if there is an isomorphism $\alpha^*\mathcal{F}\cong \cl_{\chi}\boxtimes\mathcal{F}.$\end{definition} 
\begin{lemma} A simple $G$-equivariant perverse sheaf admits a central character.\end{lemma}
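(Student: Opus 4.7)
The plan is to use an induction-equivalence-style argument together with Schur's lemma. First, I will extract the central character from the $G$-equivariant structure: restricting the $G$-equivariance to $Z \subset G$, and using that $Z$ acts trivially on $X$ (so that the action and projection maps $Z \times X \to X$ coincide), the equivariance isomorphism $a^*\mathcal{F} \cong p^*\mathcal{F}$ restricts to an automorphism of $\constant_Z \boxtimes \mathcal{F}$. Since $\mathcal{F}$ is simple, Schur's lemma gives $\End(\mathcal{F}) = \Ql$, so this data is a locally constant function $Z \to \Ql^{\times}$. The cocycle condition makes it a group homomorphism, and local constancy forces it to factor through $\pi_0(Z) = Z/Z^{\circ}$, yielding our candidate character $\chi$.

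Next, I will view $\alpha^*\mathcal{F}$ as a $G$-equivariant perverse sheaf on $G/Z \times X$ for the action $g \cdot (\bar{h}, x) = (\overline{gh}, x)$ (left translation on the first factor, trivial on the second); this is the unique $G$-action on $G/Z \times X$ making $\alpha$ $G$-equivariant. Under this action, the first projection $\pi_1: G/Z \times X \to G/Z$ is $G$-equivariant with fiber $X$ over $eZ$ and stabilizer $Z$, so induction equivalence gives $\perv_G(G/Z \times X) \simeq \perv_Z(X)$ via restriction to $\{eZ\} \times X$. Under this equivalence, $\alpha^*\mathcal{F}$ restricts on the fiber to $\mathcal{F}$, with residual $Z$-equivariant structure obtained by restricting the $G$-equivariance of $\mathcal{F}$ to $Z$; this is precisely $(\mathcal{F}, \chi)$.

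Third, I will check that $\cl_\chi \boxtimes \mathcal{F}$, equipped with its natural $G$-equivariant structure (the $G$-equivariance of $\cl_\chi$ on $G/Z$ tensored with the trivial action on $\mathcal{F}$), also corresponds to $(\mathcal{F}, \chi)$ under the same equivalence. Indeed, the stalk of $\cl_\chi$ at $eZ$ is a line on which $Z$ acts by $\chi$ (this is how $\cl_\chi$ is defined from $\chi$ in the preceding proposition), so restriction to $\{eZ\} \times X$ gives $\mathcal{F}$ with $Z$-action by $\chi$. Since the induction equivalence is fully faithful, this produces the desired isomorphism $\alpha^*\mathcal{F} \cong \cl_\chi \boxtimes \mathcal{F}$.

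The main obstacle is keeping the equivariant structures straight in the second and third steps, in particular verifying that the residual $Z$-action on $\alpha^*\mathcal{F}|_{\{eZ\} \times X}$ really agrees with $\chi$ under the tautological identification of this fiber with $\mathcal{F}$. This is a diagram-chase along the quotient $G \times X \to G/Z \times X$, tracking the $G$-equivariance isomorphism $a^*\mathcal{F} \cong p^*\mathcal{F}$; it requires care but not ingenuity, and once verified the result follows from the general equivalence.
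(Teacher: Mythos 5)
Your argument is correct, but it takes a genuinely different route from the paper's. The paper writes $\mathcal{F}=\ic(\co,\cl)$ for $\co$ the open orbit in its support, notes that $Z/Z^\circ$ maps to (the center of) $\pi^G_1(\co)$ and hence acts on the irreducible monodromy representation $V_\cl$ by a scalar character $\chi$, which gives $\alpha'^*\cl\cong\cl_\chi\boxtimes\cl$ on the orbit; the isomorphism is then propagated to the intermediate extension by base change applied to $j_!\cl\to j_*\cl$. You instead extract $\chi$ directly from the equivariant structure, restricting $a^*\mathcal{F}\cong p^*\mathcal{F}$ to $Z\times X$ where $a=p$, and using that $\End(\constant_Z\boxtimes\mathcal{F})\cong H^0(Z)\otimes\End(\mathcal{F})$ is the ring of locally constant $\ql$-valued functions on $Z$ (K\"unneth, $\Hom^{<0}(\mathcal{F},\mathcal{F})=0$ for perverse sheaves, and Schur's lemma); you then match $\alpha^*\mathcal{F}$ and $\cl_\chi\boxtimes\mathcal{F}$ through the induction equivalence $\perv_G(G/Z\times X)=\perv_G(G\times^Z X)\simeq\perv_Z(X)$. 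Both proofs ultimately rest on Schur's lemma---the paper's implicitly, when it asserts that $V_\cl|_{Z/Z^\circ}$ yields a single character. Your version buys generality and economy: it never invokes the finitely-many-orbits hypothesis or the classification of simple equivariant perverse sheaves as intersection complexes of orbit local systems, needing only $\End(\mathcal{F})=\ql$, and it produces the isomorphism equivariantly. The paper's version buys an explicit description of $\chi$ as the pullback of the monodromy of $\cl$ along $Z/Z^\circ\to\pi^G_1(\co)$, which is the form in which central characters are actually computed downstream (e.g., to invoke Lusztig's result that non-isomorphic cuspidals have distinct central characters). The bookkeeping you defer---that both restrictions to $\{eZ\}\times X$ carry the residual $Z$-structure $\chi$---is indeed routine, since $Z$ acts trivially on all of $G/Z\times X$ and the residual structure is just the restriction of the global $Z$-automorphism action.
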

\begin{proof} Let $P = \ic(\co, \cl)$ be a simple $G$-equivariant perverse sheaf with $\co$ a $G$-orbit in $X$. The $G$-equivariant fundamental group of $\co$ is $\pi^G_1(\co) = \textup{Stab}_{G}(x)/\textup{Stab}_{G}(x)^\circ$, where $x\in\co$. Since $Z$ acts trivially on $X$, we have a map $Z/Z^\circ\rightarrow \pi^G_1(\co)$. Thus, if $\cl$ is a local system on $\co$ corresponding to the $\pi^G_1(\co)$ representation $V_\cl$, the central character of $\cl$ is the local system $\cl_\chi$ corresponding to the restriction $V_\cl|_{Z/Z^\circ}$.  To see that $\cl_\chi$ is the central character of $P$, use base change with the diagram 

\begin{center}
\begin{tikzcd}
G/Z\times \co\arrow{r}{\alpha'}\arrow[hookrightarrow]{d}{j'} & \co\arrow[hookrightarrow]{d}{j}\\
 G/Z\times X\arrow{r}{\alpha} & X
 \end{tikzcd}
 \end{center}
 
Under base change, the natural map $\alpha^*j_!\cl \rightarrow \alpha^*j_*\cl$ goes to  $j'_!\alpha'^*\cl\cong j'_!(\cl_\chi\boxtimes\cl)\rightarrow j'_*\alpha'^*\cl\cong j'_*(\cl_\chi\boxtimes\cl)$. Hence, $\alpha^*P \cong \cl_\chi\boxtimes P$.
 \end{proof}

We record some facts about external tensor product for our convenience.
\begin{enumerate}
\item $t$-exact
\item $\vd \mathcal{F}\boxtimes\vd\mathcal{G}\cong \vd(\mathcal{F}\boxtimes\mathcal{G})$
\item $(f\times g)^*(\mathcal{F}\boxtimes\mathcal{G})\cong f^*\mathcal{F}\boxtimes g^*\mathcal{G}$
\item $(f\times g)_*(\mathcal{F}\boxtimes\mathcal{G})\cong f_*\mathcal{F}\boxtimes g_*\mathcal{G}$
\item $(f\times g)_!(\mathcal{F}\boxtimes\mathcal{G})\cong f_!\mathcal{F}\boxtimes g_!\mathcal{G}$
\end{enumerate}

\begin{lemma} \label{lemma:times}Suppose $\mathcal{F}$ has central character $\chi$ and $\mathcal{G}$ has central character $\psi$. Then $\mathcal{F}\otimes^L\mathcal{G}$ has central character $\chi\otimes\psi$\end{lemma}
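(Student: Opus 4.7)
The plan is to pull back the internal tensor product along $\alpha$ and commute everything through. By hypothesis, $\alpha^{*}\mathcal{F}\cong \cl_\chi\boxtimes \mathcal{F}$ and $\alpha^{*}\mathcal{G}\cong \cl_\psi\boxtimes \mathcal{G}$. Since the pullback functor $\alpha^{*}$ is symmetric monoidal with respect to $\otimes^{L}$, I would begin with
\[
\alpha^{*}(\mathcal{F}\otimes^{L}\mathcal{G})\;\cong\;\alpha^{*}\mathcal{F}\otimes^{L}\alpha^{*}\mathcal{G}\;\cong\;(\cl_\chi\boxtimes \mathcal{F})\otimes^{L}(\cl_\psi\boxtimes \mathcal{G}).
\]

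Next I would invoke the standard compatibility between the external and internal tensor products, namely $(A\boxtimes B)\otimes^{L}(C\boxtimes D)\cong (A\otimes^{L}C)\boxtimes(B\otimes^{L}D)$ on a product of varieties. This identity is immediate from writing $E\boxtimes F=p_1^{*}E\otimes^{L}p_2^{*}F$ for the two projections $p_1,p_2$ from $G/Z\times X$, together with the fact that each $p_i^{*}$ is monoidal and $\otimes^{L}$ is symmetric. Applying it to the expression above yields
\[
(\cl_\chi\boxtimes \mathcal{F})\otimes^{L}(\cl_\psi\boxtimes \mathcal{G})\;\cong\;(\cl_\chi\otimes^{L}\cl_\psi)\boxtimes(\mathcal{F}\otimes^{L}\mathcal{G}).
\]

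Finally, I would use the tensor equivalence from the first proposition of the appendix, which identifies the monoidal category of $G$-equivariant local systems on $G/Z$ with $\mathrm{Rep}(Z/Z^{\circ})$. Under this equivalence, $\cl_\chi\otimes\cl_\psi$ corresponds to the character $\chi\otimes\psi$, so $\cl_\chi\otimes^{L}\cl_\psi\cong \cl_{\chi\otimes\psi}$. Combining the isomorphisms above gives
\[
\alpha^{*}(\mathcal{F}\otimes^{L}\mathcal{G})\;\cong\;\cl_{\chi\otimes\psi}\boxtimes(\mathcal{F}\otimes^{L}\mathcal{G}),
\]
which is the defining condition for $\mathcal{F}\otimes^{L}\mathcal{G}$ to admit central character $\chi\otimes\psi$.

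There is no real obstacle here: the argument is entirely formal, relying only on monoidality of $\alpha^{*}$, the Künneth-type identity relating $\boxtimes$ and $\otimes^{L}$, and the tensor equivalence already established for $G$-equivariant local systems on $G/Z$. The only point warranting care is to make sure the compatibility of $\boxtimes$ and $\otimes^{L}$ is invoked at the derived (rather than just abelian) level, but since both functors are given as pullbacks composed with derived tensor product, this is automatic.
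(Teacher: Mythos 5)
Your argument is correct and matches the paper's proof essentially line for line: both pull back along $\alpha$, use monoidality of $\alpha^{*}$, and then expand $\boxtimes$ via the projections $p_{G/Z}^{*}$ and $p_X^{*}$ to regroup the factors. Your final step explicitly identifying $\cl_\chi\otimes\cl_\psi\cong\cl_{\chi\otimes\psi}$ via the tensor equivalence is left implicit in the paper but is a welcome clarification.
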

\begin{proof}
\begin{align*}\alpha^*(\mathcal{F}\otimes^L\mathcal{G}) &\cong\alpha^*\mathcal{F}\otimes^L\alpha^*\mathcal{G}\\
											       &\cong(\cl_{\chi}\boxtimes\mathcal{F})\otimes^L(\cl_{\psi}\boxtimes\mathcal{G})\\
											       &\cong p_{G/Z}^*\cl_{\chi}\otimes p_X^*\mathcal{F}\otimes p_{G/Z}^*\cl_{\psi}\otimes p_X^*\mathcal{G}\\ 
											       &\cong p_{G/Z}^*(\cl_{\chi}\otimes\cl_{\psi})\otimes p_X^*(\mathcal{F}\otimes\mathcal{G})\\
											       &\cong(\cl_{\chi}\otimes\cl_{\psi})\boxtimes (\mathcal{F}\otimes\mathcal{G})\qedhere
											       \end{align*}
											       \end{proof}

\begin{lemma}\label{lemma:vd}Suppose  $\mathcal{F}$ has central character $\chi$. Then $\vd\mathcal{F}$ has central character $\chi^{-1}$.\end{lemma}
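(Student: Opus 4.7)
The plan is to apply Verdier duality directly to the isomorphism $\alpha^*\mathcal{F} \cong \cl_\chi \boxtimes \mathcal{F}$ that defines the central character of $\mathcal{F}$, trading $\alpha^!$ for $\alpha^*$ using smoothness of the action map. First I would observe that $\alpha : G/Z \times X \to X$ is smooth of relative dimension $d := \dim(G/Z)$: this follows from the standard trick of factoring $\alpha$ as the automorphism $(g,x)\mapsto (g, gx)$ of $G/Z \times X$ followed by the smooth projection to $X$. Consequently $\alpha^! \cong \alpha^*[2d](d)$, and dually $\vd \alpha^* \cong \alpha^! \vd$.

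Next, I would apply $\vd$ to the defining isomorphism and use that $\vd$ commutes with external tensor product:
$$\alpha^!\,\vd\mathcal{F} \;\cong\; \vd\,\alpha^*\mathcal{F} \;\cong\; \vd(\cl_\chi \boxtimes \mathcal{F}) \;\cong\; \vd\cl_\chi \,\boxtimes\, \vd\mathcal{F}.$$
Since $G/Z$ is smooth of dimension $d$ and $\cl_\chi$ is a rank one local system, we have $\vd\cl_\chi \cong \cl_\chi^{\vee}[2d](d)$, where $\cl_\chi^\vee$ denotes the dual local system. Under the equivalence of the preceding proposition, $\cl_\chi^\vee$ corresponds to the dual of the one-dimensional character $\chi$ of $Z/Z^\circ$, which is $\chi^{-1}$, so $\cl_\chi^\vee \cong \cl_{\chi^{-1}}$. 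Converting $\alpha^!$ back to $\alpha^*$ by applying $[-2d](-d)$, the shifts and Tate twists cancel to yield
$$\alpha^*\vd\mathcal{F} \;\cong\; \cl_{\chi^{-1}} \boxtimes \vd\mathcal{F},$$
which is precisely the statement that $\vd\mathcal{F}$ admits central character $\chi^{-1}$.

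No step here is deep; the main thing to watch is the bookkeeping of shifts and Tate twists coming from three sources — smoothness of $\alpha$, smoothness of $G/Z$, and the compatibility of $\vd$ with $\boxtimes$ — together with the verification that dualizing the local system $\cl_\chi$ corresponds to inverting the character $\chi$, which is immediate in the one-dimensional case.
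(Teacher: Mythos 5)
Your proposal is correct and follows essentially the same route as the paper: apply $\vd$ to the defining isomorphism, trade $\alpha^!$ for $\alpha^*$ using smoothness of the action map, commute $\vd$ past $\boxtimes$, and identify $\vd\cl_\chi$ with $\cl_{\chi^{-1}}$ up to the shift that cancels. The only cosmetic difference is that you also track Tate twists, which the paper's version omits.
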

\begin{proof} Since $\alpha$ is flat with $d=\dim G/Z$ dimensional fibers, we have $\alpha^!\cong\alpha^*[2d]$. Thus, we see that 
\begin{align*}\alpha^*\vd\mathcal{F}&\cong\alpha^!\vd\mathcal{F}[-2d]\\
						    &\cong\vd\alpha^*\mathcal{F}[-2d]\\
						&\cong\vd(\cl_{\chi}\boxtimes\mathcal{F})[-2d]\\ 
						 &\cong\vd\cl_{\chi}\boxtimes\vd\mathcal{F}[-2d]\\
					        &\cong\cl^{\vee}_{\chi}\boxtimes\vd\mathcal{F}[2d-2d]\\
					        &\cong\cl_{\chi^{-1}}\boxtimes\vd\mathcal{F}\qedhere
											       \end{align*}\end{proof}

\begin{lemma}\label{lemma:pullback} Suppose that $f:X\rightarrow Y$ is a $G$-equivariant map between $G$-varieties on which $Z$ acts trivially. Then $f^!$, $f^*$, $f_*$, and $f_!$ preserve central character.\end{lemma}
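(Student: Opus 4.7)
The plan is to reduce all four statements to the commutative square expressing $G$-equivariance of $f$,
\[
\begin{tikzcd}
G/Z\times X \arrow{r}{\alpha_X}\arrow[swap]{d}{id\times f} & X\arrow{d}{f}\\
G/Z\times Y \arrow[swap]{r}{\alpha_Y} & Y
\end{tikzcd}
\]
and to halve the work by means of Verdier duality together with Lemma \ref{lemma:vd}.

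First I would verify that this square is Cartesian. The fiber product of $f$ and $\alpha_Y$ consists of triples $(x,g,y)$ with $f(x)=g\cdot y$; the comparison map $(g,x)\mapsto (g\cdot x,\, g,\, f(x))$ is an isomorphism onto this fiber product, with inverse $(x,g,y)\mapsto(g,\,g^{-1}\cdot x)$. As an observation I will reuse, $\alpha_X$ factors as the automorphism $(g,x)\mapsto (g, g\cdot x)$ of $G/Z\times X$ followed by the projection to $X$, and similarly for $\alpha_Y$, so both action maps are smooth.

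The case of $f^*$ is then formal: commutativity of the square gives $\alpha_X^* f^* \cong (id\times f)^*\alpha_Y^*$, so for $\mathcal{F}$ of central character $\chi$,
\[
\alpha_X^* f^*\mathcal{F} \cong (id\times f)^*(\cl_\chi\boxtimes\mathcal{F}) \cong \cl_\chi \boxtimes f^*\mathcal{F},
\]
using compatibility of external tensor with $*$-pullback (property (3) in the list preceding Lemma \ref{lemma:times}). For $f_!$, proper base change applied to the Cartesian square (which needs no additional hypothesis in the $\ell$-adic setting) gives $\alpha_Y^* f_! \cong (id\times f)_!\alpha_X^*$, and an analogous computation using property (5) shows that $f_!\mathcal{G}$ has central character $\chi$ whenever $\mathcal{G}$ does.

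The remaining two cases follow by duality. Writing $f^! = \vd\circ f^*\circ \vd$ and $f_* = \vd\circ f_!\circ \vd$, a sheaf of central character $\chi$ is sent by $\vd$ to one of character $\chi^{-1}$ by Lemma \ref{lemma:vd}, is preserved by $f^*$ or $f_!$ as just shown, and is finally restored to central character $\chi$ by the outer $\vd$. The main step requiring genuine care is the verification that the square is Cartesian; once that is in hand, each of the four cases reduces to a short chase through the properties of external tensor product and base change.
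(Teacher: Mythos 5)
Your proposal is correct and takes essentially the same route as the paper: the same square expressing $G$-equivariance of $f$, together with base change for the action maps. The only (harmless) differences are that you verify explicitly that the square is Cartesian, and that you deduce the $f^!$ and $f_*$ cases from $f^*$ and $f_!$ via $f^!=\vd f^*\vd$, $f_*=\vd f_!\vd$ and Lemma \ref{lemma:vd}, whereas the paper treats all four functors directly (invoking smooth base change for $f_*$ and the compatibility of $(id\times f)^!$ with $\boxtimes$ for $f^!$).
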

\begin{proof}We consider the commutative diagram:
\begin{center}
\begin{tikzpicture}[description/.style={fill=white,inner sep=1.5pt}] 

\matrix (m) [matrix of math nodes, row sep=2.5em,
column sep=2em, text height=1ex, text depth=0.25ex, nodes in empty cells]
{  G/Z\times X & G/Z\times Y\\
   X & Y\\ };
\path[->,font=\scriptsize]
(m-1-1) edge node[above]{$id\times f$}(m-1-2)
edge node[right] {$\alpha_X$} (m-2-1)
(m-1-2) edge node[right]{$\alpha_Y$} (m-2-2)
(m-2-1) edge node[above]{$f$} (m-2-2);
\end{tikzpicture}
\end{center} First, we will show pull-backs preserve central character. Suppose $\mathcal{F}$ is a $G$-equivariant sheaf on $Y$ with central character $\chi$. Then we have
\begin{align*}\alpha^*f^*\mathcal{F}&\cong(id\times f)^*\alpha^*\mathcal{F}\\
						    &\cong(id\times f)^*(\cl_{\chi}\boxtimes\mathcal{F})\\
						&\cong\cl_{\chi}\boxtimes f^*\mathcal{F}\\ 
											     \end{align*}
											     
\begin{align*}\alpha^*f^!\mathcal{F}&\cong(id\times f)^!\alpha^*\mathcal{F}\\
						    &\cong(id\times f)^!(\cl_{\chi}\boxtimes\mathcal{F})\\
						&\cong\cl_{\chi}\boxtimes f^!\mathcal{F}\\ 
											     \end{align*}	
											     
By base change and flatness of the action maps $\alpha$, we have that 
\begin{align*}\alpha^*f_*\mathcal{F}&\cong(id\times f)_*\alpha^*\mathcal{F}\\
						    &\cong(id\times f)_*(\cl_{\chi}\boxtimes\mathcal{F})\\
						&\cong\cl_{\chi}\boxtimes f_*\mathcal{F}\\ 
											     \end{align*}	
Similarly, \begin{align*}\alpha^*f_!\mathcal{F}&\cong(id\times f)_!\alpha^*\mathcal{F}\\
						    &\cong(id\times f)_!(\cl_{\chi}\boxtimes\mathcal{F})\\
						&\cong\cl_{\chi}\boxtimes f_!\mathcal{F}\qedhere
											     \end{align*}\end{proof}

\begin{lemma}\label{ccHom}Suppose $\mathcal{F}$ has central character $\chi$ and $\mathcal{G}$ has central character $\psi$. Then the sheaf $R\dgHom(\mathcal{F}, \mathcal{G})$ has central character $\chi^{-1}\otimes\psi$.\end{lemma}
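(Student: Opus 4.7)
The plan is to deduce this from Lemmas \ref{lemma:times} and \ref{lemma:vd} via the standard identity
\[ R\dgHom(\mathcal{F}, \mathcal{G}) \cong \vd(\mathcal{F} \otimes^L \vd\mathcal{G}), \]
which holds for constructible sheaves on algebraic varieties. This identity is a formal consequence of biduality $\vd\vd \cong \textup{id}$ together with the adjunction $(\otimes^L, R\dgHom)$ applied to the dualizing complex: one has $R\dgHom(\mathcal{F}, \mathcal{G}) \cong R\dgHom(\mathcal{F}, \vd\vd\mathcal{G}) \cong R\dgHom(\mathcal{F}\otimes^L \vd\mathcal{G},\, \omega_X) = \vd(\mathcal{F}\otimes^L\vd\mathcal{G})$.

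Once this identity is in hand, the argument reduces to a routine three-step chase through the preceding lemmas. First, Lemma \ref{lemma:vd} gives that $\vd\mathcal{G}$ has central character $\psi^{-1}$. Next, Lemma \ref{lemma:times} gives that $\mathcal{F}\otimes^L\vd\mathcal{G}$ has central character $\chi\otimes\psi^{-1}$. Finally, a second application of Lemma \ref{lemma:vd} shows that $\vd(\mathcal{F}\otimes^L\vd\mathcal{G})$ has central character $(\chi\otimes\psi^{-1})^{-1} = \chi^{-1}\otimes\psi$, which is exactly the required central character.

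There is essentially no hard step here: every ingredient is either a formal property of the six-functor formalism on algebraic varieties or has already been established in the preceding lemmas. If one prefers to avoid invoking the identity above, an equivalent direct route is to pull back along $\alpha\colon G/Z\times X\to X$: since $\alpha$ is smooth, there is a natural isomorphism $\alpha^* R\dgHom(\mathcal{F}, \mathcal{G}) \cong R\dgHom(\alpha^*\mathcal{F}, \alpha^*\mathcal{G})$, and one then substitutes $\alpha^*\mathcal{F} \cong \cl_\chi \boxtimes \mathcal{F}$ and $\alpha^*\mathcal{G} \cong \cl_\psi \boxtimes \mathcal{G}$ and simplifies using a K\"unneth-type identity for $R\dgHom$ together with the fact that $\cl_\chi^\vee \cong \cl_{\chi^{-1}}$ for local systems coming from characters of $Z/Z^\circ$.
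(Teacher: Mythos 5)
Your proposal is correct and follows exactly the paper's own route: the identity $R\dgHom(\mathcal{F},\mathcal{G})\cong\vd(\mathcal{F}\otimes^L\vd\mathcal{G})$ followed by Lemmas \ref{lemma:vd} and \ref{lemma:times} and a final application of Lemma \ref{lemma:vd}. The extra justification of the duality identity and the alternative pullback argument are fine but not needed.
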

\begin{proof} We have an isomorphism \[R\dgHom(\mathcal{F}, \mathcal{G}) \cong \vd(\mathcal{F}\otimes \vd\mathcal{G})).\] Of course, $\mathcal{F}\otimes \vd\mathcal{G}$ has central character $\chi\otimes\psi^{-1}$ by Lemmas \ref{lemma:vd} and \ref{lemma:times}, and we apply Lemma \ref{lemma:vd} again to get the result.\end{proof}

\begin{proposition} \label{ccHomvanish}Suppose that two perverse sheaves on $X$ have distinct central characters. Then $R\Hom(\mathcal{F}, \mathcal{F}') = 0$. In particular, $\Hom^i(\mathcal{F}', \mathcal{F}) =\Hom^i(\mathcal{F}, \mathcal{F}') =0$ for all $i$.
\end{proposition}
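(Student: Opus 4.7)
By Lemma~\ref{ccHom}, the complex $\mathcal{H} := R\dgHom(\mathcal{F},\mathcal{F}')$ will admit central character $\eta := \chi^{-1}\otimes\chi'$, nontrivial by hypothesis. Since $R\Hom(\mathcal{F},\mathcal{F}') \cong R\Gamma_G(X,\mathcal{H})$, the plan reduces to proving $R\Gamma_G(X,\mathcal{H}) = 0$.

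My strategy would be devissage along the $G$-orbit stratification, which is finite by assumption. For any $G$-stable open/closed decomposition $X = U\sqcup Y$ with $j\colon U\hookrightarrow X$ and $i\colon Y\hookrightarrow X$, I would apply $R\Gamma_G$ to the distinguished triangle $j_!j^*\mathcal{H}\to\mathcal{H}\to i_*i^*\mathcal{H}\to$ and exploit the resulting long exact sequence; Lemma~\ref{lemma:pullback} guarantees that both $j^*\mathcal{H}$ and $i^*\mathcal{H}$ retain the central character $\eta$. Inducting on the number of orbits would then reduce the vanishing to the case of a single orbit.

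For an orbit $\mathcal{O}\cong G/H$ (note $Z\subseteq H$ since $Z$ acts trivially on $X$), I would use induction equivalence to identify $R\Gamma_G(\mathcal{O},\mathcal{H}|_{\mathcal{O}})$ with $R\Hom_H(\constant,V)$, where $V$ is the complex of rational $H$-representations corresponding to $\mathcal{H}|_{\mathcal{O}}$. The central character hypothesis should force $Z^\circ$ to act trivially on $V$ (since $\mathcal{L}_\eta$ has trivial $Z^\circ$-action) and $Z/Z^\circ$ to act through the nontrivial character $\eta$. Next I would apply the Hochschild--Serre spectral sequence first to $1\to Z\to H\to H/Z\to 1$ and then to $1\to Z^\circ\to Z\to Z/Z^\circ\to 1$, observing that the centrality of $Z$ causes the outer group to act trivially on the inner cohomology in each step. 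The computation should then reduce to the standard vanishing $H^\bullet(Z/Z^\circ,V_\eta) = 0$ in characteristic zero for a nontrivial character $\eta$ of the finite group $Z/Z^\circ$.

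The main technical obstacle I anticipate is verifying that the central character on $\mathcal{H}|_\mathcal{O}$ translates under induction equivalence precisely to the claimed action of $Z$ on $V$; once this bookkeeping is in place, the rest should be routine homological algebra.
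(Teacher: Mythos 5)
Your argument reaches the correct conclusion by a genuinely different, and much longer, route than the paper's. The paper's proof is a three-line observation: by Lemma~\ref{ccHom} the complex $R\dgHom(\mathcal{F},\mathcal{F}')$ has the nontrivial central character $\chi^{-1}\otimes\psi$; by Lemma~\ref{lemma:pullback} the pushforward $a_*$ along $a\colon X\to\pt$ preserves central characters; and on a point a nonzero object can only admit the trivial central character, since there $\alpha$ is the projection $G/Z\times\pt\to\pt$ and $\alpha^*\mathcal{K}\cong\constant_{G/Z}\boxtimes\mathcal{K}$ canonically. Hence $a_*R\dgHom(\mathcal{F},\mathcal{F}')=0$. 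Your devissage-plus-orbit computation re-derives this by hand; it costs more but makes the underlying mechanism explicit (vanishing of invariants of a nontrivial character of the finite group $Z/Z^\circ$), and it is the shape of argument one would need if the statement ``$a_*$ preserves central characters'' were not already in hand.

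Two steps of your plan need repair, though neither is fatal. First, with the triangle $j_!j^*\mathcal{H}\to\mathcal{H}\to i_*i^*\mathcal{H}$ the first term does not compute $R\Gamma_G(U,j^*\mathcal{H})$: one has $a_*i_*=(a\circ i)_*$, but $a_*j_!\neq (a\circ j)_!$ and $\neq(a\circ j)_*$ when $X$ is not proper, so your induction on the number of orbits does not close. Use instead the dual triangle $i_*i^!\mathcal{H}\to\mathcal{H}\to j_*j^*\mathcal{H}$ (both $i^!$ and $j^*$ preserve the central character by Lemma~\ref{lemma:pullback}, and $a_*$ composes correctly with both $i_*$ and $j_*$), or run the whole induction with compactly supported cohomology. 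Second, $D_H(\pt)$ is not the derived category of rational $H$-representations, and Hochschild--Serre for algebraic-group cohomology is not the relevant tool (for example $\cH^\bullet_{\mathbb{G}_m}(\pt)=\ql[x]$ with $x$ in degree $2$, while $\mathbb{G}_m$ is linearly reductive). The correct version of your final step is the one already used in the proof of Lemma~\ref{prop:extiscoho}: for a $G$-equivariant local system on $\co\cong G/H$ given by an $H/H^\circ$-representation $V$, one has $\cH^\bullet_G(\co,\cl_V)\cong(\cH^\bullet_{H^\circ}(\pt)\otimes V)^{H/H^\circ}$ by \cite[1.9(a)]{L6}; the image of $Z/Z^\circ$ in $H/H^\circ$ is central, acts trivially on $\cH^\bullet_{H^\circ}(\pt)$, and acts on $V$ through the nontrivial character $\eta$ (this is exactly how the appendix defines the central character of a local system), so the invariants vanish. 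With these corrections, and a final devissage over the cohomology sheaves of $\mathcal{H}|_\co$, your proof goes through.
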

\begin{proof} First, we note that on $\pt$, the only possible central character is trivial. Let $\mathcal{F}$ and $\mathcal{F}'$ be perverse sheaves on $X$ with central characters $\chi$ and $\psi$ (and $\chi\neq\psi$). Let $a:X\rightarrow \pt$ be the constant map. Recall that $R\Hom(\mathcal{F}, \mathcal{F}') = a_*R\dgHom(\mathcal{F}, \mathcal{F}')$. If $R\dgHom(\mathcal{F}, \mathcal{F}')\neq 0$, then Lemma \ref{ccHom} implies that $R\dgHom(\mathcal{F}, \mathcal{F}')$ has nontrivial central character equal to $\chi^{-1}\otimes\psi$. Then,   
$a_*R\dgHom(\mathcal{F}, \mathcal{F}')$ also has central character $\chi^{-1}\otimes\psi$ by Lemma \ref{lemma:pullback} which is impossible.\end{proof}


\section*{Acknowledgements}
We are very grateful to Pramod Achar for guidance in the early stages of this work and Jay Taylor for a number of useful comments and suggestions. We would also like to thank Syu Kato for explaining the action of Frobenius in Lemma \ref{prop:extiscoho}.


\end{document}